\theoremstyle{plain}
    \newtheorem{thm}{Theorem}[section]
    \newtheorem{claim}[thm]{Claim}
    \newtheorem{corollary}[thm]{Corollary}
    \newtheorem{example}[thm]{Example}
    \newtheorem{lemma}[thm]{Lemma}
    \newtheorem{proposition}[thm]{Proposition}
    \newtheorem{question}[thm]{Question}
    \newtheorem{theorem}[thm]{Theorem}
\theoremstyle{definition}
    \newtheorem{remark}[thm]{Remark}
\theoremstyle{remark}
    \newtheorem{setup}[thm]{}
\newcommand{\BCC}{\mathbb{C}}
\newcommand{\PP}{\mathbb{P}}
\newcommand{\BPP}{\mathbb{P}}
\newcommand{\BQQ}{\mathbb{Q}}
\newcommand{\Q}{\mathbb{Q}}
\newcommand{\R}{\mathbb{R}}
\newcommand{\BRR}{\mathbb{R}}
\newcommand{\BZZ}{\mathbb{Z}}
\newcommand{\OO}{\mathcal{O}}
\newcommand{\SO}{\mathcal{O}}
\newcommand{\diag}{\operatorname{diag}}
\newcommand{\id}{\operatorname{id}}
\newcommand{\NE}{\operatorname{NE}}
\newcommand{\NS}{\operatorname{NS}}
\newcommand{\Pic}{\operatorname{Pic}}
\newcommand{\rank}{\operatorname{rank}}
\newcommand{\Sing}{\operatorname{Sing}}
\newcommand{\Supp}{\operatorname{Supp}}
\newcommand{\isom}{\simeq}
\begin{document}

\title[Invariant hypersurfaces]{
Invariant hypersurfaces of endomorphisms of projective varieties}

\author{De-Qi Zhang}
\address
{
\textsc{Department of Mathematics} \endgraf
\textsc{National University of Singapore,
10 Lower Kent Ridge Road,
Singapore 119076
}}
\email{matzdq@nus.edu.sg}

\begin{abstract}
We consider surjective endomorphisms $f$ of degree $> 1$ on projective
manifolds $X$ of Picard number one
and their $f^{-1}$-stable hypersurfaces $V$, and
show that $V$ is rationally chain connected.
Also given is an optimal upper bound for the number of $f^{-1}$-stable prime divisors
on (not necessarily smooth) projective varieties.
\end{abstract}

\subjclass[2000]{14E20, 14J45, 37F10, 32H50}
\keywords{endomorphism, projective spaces, iteration, complex dynamics}

\thanks{The author is partially supported by an ARF of NUS}

\maketitle

\section{Introduction}

We work over the field $\BCC$ of complex numbers.
Theorems \ref{ThE'} $\sim$ \ref{ThG} below are our main results.
We refer to
\cite[Definition 2.34]{KM} for the definitions of {\it Kawamata log terminal} ($klt$) and
{\it log canonical singularities}.
See S.~-W. Zhang \cite[\S1.2, \S4.1]{Zs} for the
Dynamic Manin-Mumford conjecture solved
for the pair $(X, f)$ as in the conclusion part of Theorem \ref{ThE'} below,
and \cite{Ca} for a related result on endomorphisms of
(not necessarily projective) compact complex manifolds.

\begin{theorem}\label{ThE'}
Let $X$ be a normal projective variety of dimension $n \ge 2$,
$V_i$ $(1 \le i \le s)$ prime divisors, $H$ an ample Cartier divisor,
and $f : X \to X$ an endomorphism with $\deg(f) = q^n > 1$ such that
$($for all $i)$:
\begin{itemize}
\item[(1)]
$X$ has only log canonical singularities around $\, \cup \, V_i \,$;
\item[(2)]
$V_i$ is Cartier and $V_i \equiv d_iH$ $($numerically$)$ for some $d_i > 0$; and
\item[(3)]
$f^{-1}(V_i) = V_i$.
\end{itemize}
Then $s \le n+1$.
Further, the equality $s = n+1$ holds if and only if:
$$X = \BPP^n, \,\,\,\, \, V_i = \{X_i = 0\} \,\,\,\, (1 \le i \le n+1)$$
$($in suitable projective coordinates$)$,
and $f$ is given by
$$f: \,\, [X_1, \dots, X_{n+1}] \to [X_1^q, \dots, X_{n+1}^q].$$
\end{theorem}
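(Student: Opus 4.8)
The plan is to reduce everything to the multiplicities of $f$ along the $V_i$ and then run an induction on $n=\dim X$ by restricting $f$ to one invariant divisor. First I would record the numerical consequences of the hypotheses. Since $f^{-1}(V_i)=V_i$ with $V_i$ prime, we have $f^*V_i=a_iV_i$ for the ramification index $a_i\ge 1$; as $V_i\equiv d_iH$ with $d_i>0$ this forces $f^*H\equiv a_iH$, and taking $n$-th self-intersections with $(f^*H)^n=\deg(f)\,H^n=q^nH^n$ gives $a_i=q$ for every $i$ and $f^*H\equiv qH$. In particular $f^*V_i=qV_i$, so the ramification divisor satisfies $R_f\ge (q-1)\sum_iV_i$, and writing $D=\sum_iV_i$ the formula $K_X=f^*K_X+R_f$ rearranges into the log ramification identity
\[ K_X+D=f^*(K_X+D)+R', \qquad R':=R_f-(q-1)D\ge 0. \]
Intersecting with $H^{n-1}$ and using $f^*M\cdot H^{n-1}=q\,(M\cdot H^{n-1})$ for every class $M$ (from $f^*H\equiv qH$ and the projection formula), I get $(q-1)\bigl(-(K_X+D)\bigr)\cdot H^{n-1}=R'\cdot H^{n-1}\ge 0$, so $-(K_X+D)\cdot H^{n-1}\ge 0$, with equality exactly when $R'=0$, i.e.\ when $f$ is étale away from $D$.

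Next comes the induction. Fix $V_1$. Since $f^{-1}(V_1)=V_1$ and $f$ is surjective, $f$ restricts to a surjective endomorphism $g:=f|_{V_1}\colon V_1\to V_1$ of degree $\deg(f)/q=q^{n-1}>1$. For $j\ne 1$ the locus $V_1\cap V_j$ is nonempty (as $V_1\cdot V_j\cdot H^{n-2}=d_1d_jH^n>0$ for $n\ge 2$), is of pure codimension one in $V_1$ with class proportional to the ample $H|_{V_1}$, and is $g^{-1}$-stable because $g^{-1}(V_1\cap V_j)=V_1\cap f^{-1}(V_j)=V_1\cap V_j$. By adjunction and inversion of adjunction for the Cartier divisor $V_1$ on the log canonical $X$ (passing if necessary to the normalization of $V_1$ with the different as boundary), $(V_1,\{V_1\cap V_j\}_{j\ne 1},H|_{V_1},g)$ satisfies the hypotheses of the theorem in dimension $n-1$. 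By induction $V_1$ carries at most $(n-1)+1=n$ distinct $g^{-1}$-stable prime divisors; since each of the $s-1$ divisors $V_j$ $(j\ne 1)$ cuts out at least one such prime divisor, this yields $s-1\le n$, i.e.\ $s\le n+1$. The base case $n=1$ is the classical statement that a degree $>1$ endomorphism of a smooth curve has at most two totally invariant points, two occurring only for $(\BPP^1,\ z\mapsto z^q)$.

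I expect the main obstacle to be making this restriction step rigorous and extracting the equality case. Two points need care: (a) the adjunction transfer of the log canonical condition to $V_1$, including normality and the correct boundary divisor, which is exactly where hypotheses (1)--(2) and the Cartier assumption are essential; and (b) the bookkeeping that distinct $V_j$ produce \emph{distinct} $g^{-1}$-stable prime divisors on $V_1$ --- a priori $V_1\cap V_j$ may be reducible, or several $V_j$ might share a codimension-two component, and one must exclude this (via the numerical and log canonical constraints) to keep the count exact. For the equality $s=n+1$ the inequality is saturated at every stage, so $R'=0$, $f$ is étale in codimension one over $X\setminus D$, and the induction forces $V_1\cong\BPP^{n-1}$ with its $n$ invariant divisors the coordinate hyperplanes and $g$ the $q$-power map. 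Since $V_1\cong\BPP^{n-1}$ is then an ample Cartier divisor on $X$, a Fujita-type characterization of projective space gives $X\cong\BPP^n$; the $V_i$ must be the $n+1$ coordinate hyperplanes (each $d_i=1$), and an endomorphism of $\BPP^n$ stabilizing all of them with $f^*\OO(1)=\OO(q)$ is forced to be $[X_1^q,\dots,X_{n+1}^q]$, completing the proof.
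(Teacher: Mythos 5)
Your opening numerical reductions ($f^*V_i=qV_i$, $f^*H\equiv qH$, the log ramification identity) match the start of the paper's proof, but the core of your argument --- restricting $f$ to $V_1$ and invoking the theorem inductively in dimension $n-1$ --- has a genuine gap: the hypotheses of the theorem do not descend to $(V_1,\{\text{components of }V_1\cap V_j\})$. What inversion of adjunction \cite{Kk} actually gives is that the \emph{pair} $(V_1',\mathrm{Diff})$ is log canonical on the normalization $V_1'$, where the different contains the conductor and the restricted divisors; it does not give that $K_{V_1'}$ alone is $\Q$-Cartier, and, more seriously, the irreducible components of $V_1\cap V_j$ need not be Cartier (or even $\Q$-Cartier) on $V_1'$, nor numerically proportional to $H|_{V_1}$ once $V_1\cap V_j$ splits into several components. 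So neither Theorem \ref{ThE'} nor its $\Q$-Cartier variant, Proposition \ref{ThF}, is applicable downstairs, and this affects the inequality $s\le n+1$ as well as the equality case. This is not a bookkeeping issue: Example \ref{ex1ThA} exhibits a klt variety of Picard number one with a polarized endomorphism of degree $q^n>1$ and exactly $n+1$ totally invariant prime divisors that are \emph{not} Cartier, and that variety is not $\PP^n$. Hence once Cartier-ness is lost after one restriction --- which is exactly what happens --- the inductive conclusion ``$V_1\cong\PP^{n-1}$ with coordinate hyperplanes'' that your equality case relies on is false in the generality you would have available.

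The paper circumvents precisely this by never re-invoking the theorem in lower dimension: it builds a chain of normalizations $\sigma_i:X_i\to X_{i-1}$ of invariant divisors, carrying along only log canonical \emph{pairs} $(X_i,D_i)$ via \cite{Kk} (Propositions \ref{ThC} and \ref{ThC'} supply the lc and normality/Cohen--Macaulay inputs), all the way down to a surface $S=X_{n-2}$; there the count is performed by Lemma \ref{surf} and Claim \ref{c2.1} --- a del Pezzo surface with Du Val singularities has $K_S^2\le 9$, while $k$ ample Cartier summands in $C$ force $C^2\ge k^2$, so at most three survive, i.e.\ $s\le n+1$ --- and $X=\PP^n$ is then recovered by climbing back up with Mori's theorem \cite[Theorem 3.6]{Mo}. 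Note, finally, a second gap in your equality case: Mori's (Fujita-type) characterization needs not only that $V_1\cong\PP^{n-1}$ is an ample Cartier divisor but also that $V_1$ itself is normal (Claim \ref{c2.2}) and that the normal bundle satisfies $\SO_X(V_1)|_{V_1}\cong\SO_{\PP^{n-1}}(1)$; a priori one could have $\SO(e)$ with $e\ge 2$, and ruling this out is exactly the chain of self-intersection inequalities (using the ordering $d_{n+1}\ge\cdots\ge d_1$) in the paper's proof, which your sketch omits.
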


The conditions (1) and (2) in Theorem \ref{ThE'} are satisfied
if $X$ is smooth with Picard number $\rho(X) = 1$.
The ampleness of $V_i$ in Theorem \ref{ThE'} above and
the related result Proposition \ref{ThF} below
(with the Cartier-ness of $V_i$ replaced by the weaker $\Q$-Cartier-ness)
is quite necessary because there are endomorphisms $f$ of degree $> 1$
on toric surfaces whose boundary divisors have as many irreducible components
as you like and are all stabilized by $f^{-1}$.
The condition (1) is used to guarantee the inversion of adjunction (cf. \cite{Kk})
and can be removed in dimension two (cf. \cite[Theorem B]{Fa}, \cite[Theorem 4.3.1]{ENS}).

A projective variety $X$ is {\it rationally chain connected} if every two points $x_i \in X$
are contained in a connected chain of rational curves on $X$.
When $X$ is smooth, $X$ is rationally {\it chain} connected if and only if $X$
is {\it rationally connected}, in the sense of Campana, and Koll\'ar-Miyaoka-Mori
(\cite{Cp}, \cite{KoMM}).

The condition (1) below is satisfied if $X$ is $\Q$-factorial with
Picard number $\rho(X) = 1$,
while the smoothness (or at least the mildness of singularities) of $X$
in (3) is necessary (cf.~Remark \ref{rThB}).

\begin{theorem}\label{ThB'}
\footnote{By the recent paper of
A.~Broustet and A.~Hoering ''Singularities of varieties admitting an endomorphism,''
arXiv:1304.4013, the condition (1) in Theorem \ref{ThE'}, condition (2) in Theorem \ref{ThB'}
and similar conditions in Propositions \ref{ThC} and \ref{ThF} are automatically satisfied if
$X$ is $\Q$-Gorenstein and has a polarized endomorphism of degree $> 1$.}
Let $X$ be a normal projective variety of dimension $n \ge 2$,
$f: X \to X$ an endomorphism of degree $> 1$, $(0 \ne)$ $V = \sum_i V_i \subset X$
a reduced divisor with $f^{-1}(V) = V$, and $H \subset X$ an ample Cartier divisor.
Assume the three conditions below $($for all $i)$:
\begin{itemize}
\item[(1)]
$-K_X \sim_{\BQQ} rH$ $(\Q$-linear equivalence$)$ and $V_i \sim_{\BQQ} d_iH$ for some
$r, d_i \in \Q$;
\item[(2)]
$X$ has only log canonical singularities around $V$; and
\item[(3)]
$X$ is further assumed to be smooth
if: $V = V_1$ $($i.e., $V$ is irreducible$)$, $K_X + V \sim_{\BQQ} 0$ and
$f$ is \'etale outside $V \cup f^{-1}(\Sing X)$.
\end{itemize}
Then $X$, each irreducible component $V_i$ and the normalization of
$V_i$ are all rationally chain connected.
Further, $-K_X$ is an ample $\BQQ$-Cartier divisor, i.e., $r > 0$ in $(1)$.
\end{theorem}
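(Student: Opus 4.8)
The plan is to extract first the numerical backbone of the configuration, then leverage ramification twice: once for $X$ itself and once for the pair $(X,V)$. Replacing $f$ by an iterate (which affects none of the conclusions), I may assume $f^{-1}(V_i)=V_i$ for every $i$. As $f$ is finite and surjective, $f^*V_i=m_iV_i$ with $m_i\ge 1$ the ramification index along $V_i$. Since $V_i\sim_{\Q}d_iH$ with $d_i>0$, I get $d_if^*H\sim_{\Q}f^*V_i\sim_{\Q}m_id_iH$, hence $f^*H\sim_{\Q}m_iH$; thus $m_i=:m$ is independent of $i$ and $f^*H\equiv mH$. Comparing top self-intersections, $m^n=\deg f>1$, so $m\ge 2$, and in particular $f$ ramifies along each $V_i$ with index $m\ge 2$, giving $R_f\ge(m-1)\sum_iV_i=(m-1)V>0$.

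I would then feed this into the ramification formula $K_X\sim_{\Q}f^*K_X+R_f$. Writing $K_X\sim_{\Q}-rH$ and $f^*K_X\sim_{\Q}-rmH$ yields $R_f\sim_{\Q}r(m-1)H$, and intersecting with $H^{n-1}$ gives $r(m-1)H^n=R_f\cdot H^{n-1}>0$ because $R_f$ is effective and nonzero and $H$ is ample. Hence $r>0$ and $-K_X\sim_{\Q}rH$ is ample: this already proves the last assertion of the theorem and exhibits $X$ as a log canonical Fano variety. Applying the same computation to the pair $(X,V)$ controls the adjunction type of the components: since $f^{-1}(V)=V$, the logarithmic ramification $R''$ defined by $K_X+V\sim_{\Q}f^*(K_X+V)+R''$ is effective and equals the ramification of $f$ away from $V$. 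With $K_X+V\sim_{\Q}(\sum_jd_j-r)H$ this reads $(\sum_jd_j-r)(1-m)H\sim_{\Q}R''\ge 0$, so intersecting with $H^{n-1}$ and using $1-m<0$ forces $\sum_jd_j\le r$, with equality exactly when $f$ is étale outside $V$. Consequently $d_i\le r$ for all $i$, and $d_i=r$ is possible only when $s=1$.

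Now I restrict to the divisors. The morphism $f$ induces on the normalization $\nu_i\colon\widetilde V_i\to V_i$ a polarized endomorphism $\widetilde f_i$ of degree $m^{n-1}>1$ with $\widetilde f_i^*(\nu_i^*H)\equiv m\,\nu_i^*H$, and inversion of adjunction (\cite{Kk}), granted condition (2), shows that $(\widetilde V_i,\operatorname{Diff})$ is log canonical with $K_{\widetilde V_i}+\operatorname{Diff}\sim_{\Q}(d_i-r)\nu_i^*H$ for an effective different $\operatorname{Diff}$. Since $-K_X$ is ample and $X$ is log canonical, $X$ is rationally chain connected by the rational chain connectedness of log Fano varieties (Hacon--McKernan; Q.~Zhang). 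When $d_i<r$ — which is automatic as soon as $s\ge 2$, by the inequality above — the class $-(K_{\widetilde V_i}+\operatorname{Diff})\sim_{\Q}(r-d_i)\nu_i^*H$ is ample, so $(\widetilde V_i,\operatorname{Diff})$ is a log Fano pair and $\widetilde V_i$ is rationally chain connected; pushing forward along the finite surjection $\nu_i$ shows $V_i$ is as well.

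The single remaining case is $s=1$ with $d_1=r$, i.e. $K_X+V_1\sim_{\Q}0$, in which $(\widetilde V_1,\operatorname{Diff})$ is merely log Calabi--Yau and, by the equality discussion above, $\widetilde f_1$ is log-étale and $R''=0$, so $f$ is étale outside $V_1$ — precisely the regime singled out in condition (3). I expect \emph{this} to be the main obstacle, since an honest log Calabi--Yau variety need not be rationally chain connected: here the adjunction positivity has run out and the endomorphism alone cannot produce covering rational curves. The plan is to exploit the extra smoothness of $X$ imposed in (3), together with $R_f=(m-1)V_1$ forced by the étale hypothesis, to preclude $\widetilde V_1$ from being genuinely canonically trivial — either by showing $\operatorname{Diff}\neq 0$ (so that $-K_{\widetilde V_1}\sim_{\Q}\operatorname{Diff}$ is effective and, being $\widetilde f_1$-invariant and $\nu_1^*H$-positive, ample, making $\widetilde V_1$ Fano) or by deriving a contradiction with the ampleness of $-K_X\sim_{\Q}V_1$ from the structure of the étale map $f$, thereby ruling out the remaining configuration and completing the proof that $V_1$ and its normalization are rationally chain connected.
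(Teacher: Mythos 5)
Your numerical skeleton agrees with the paper's: equal ramification indices $q=\deg(f)^{1/n}$ along the $V_i$, the identity $K_X+V=f^*(K_X+V)+\Delta$ with $\Delta\ge 0$, the resulting inequality $\sum_j d_j\le r$ (hence $r>0$ and $-K_X$ ample), and the idea of passing to the normalization of $V_i$, using adjunction, and invoking Hacon--McKernan \cite[Corollary 1.3]{HM}. However, there are two genuine gaps, and they are precisely the two places where the paper has to work.

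First, you assert that condition (2) plus Kawakita's inversion of adjunction \cite{Kk} yields log canonicity of $(\widetilde V_i,\operatorname{Diff})$. Condition (2) only says that $X$ \emph{with empty boundary} is log canonical around $V$; what adjunction needs as input is that the \emph{pair} $(X,V)$ is log canonical near $V$, which is strictly stronger and cannot be had for free. This upgrade is exactly where the dynamics must be used a second time: the paper proves it as Proposition \ref{ThC} (if $f^{-1}(V)=V$, $f$ ramifies along the $\Q$-Cartier divisor $V$, and $X$ is lc around $V$, then $(X,V)$ is lc around $V$), via a log canonical threshold and discrepancy computation using \cite[Proposition 5.20]{KM} applied to $f$. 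Without this step, or a substitute for it, the hypotheses of \cite[Corollary 1.3]{HM} for the pairs on the normalizations $\widetilde V_i$ are not verified, and none of your RCC conclusions for the components follows.

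Second, the case you yourself single out as the main obstacle --- $V=V_1$ irreducible, $K_X+V\sim_{\Q}0$, $f$ \'etale outside $V\cup f^{-1}(\Sing X)$ --- is left as a plan, and neither suggested strategy works as stated. The inference that $\operatorname{Diff}\ne 0$, being $\widetilde f_1$-invariant and $H$-positive, is ample is unsound: an effective invariant divisor need not be ample or even big. Moreover, Remark \ref{rThB}(1) shows this configuration genuinely occurs for singular $X$ (a cone over an elliptic curve with $V$ a section, where $V$ is \emph{not} rationally chain connected), so no local or numerical argument can rule it out; the smoothness from condition (3) must enter through a global classification. The paper's resolution is to quote Hwang--Nakayama \cite[Theorem 2.1]{HN}: once $X$ is smooth, an endomorphism of degree $>1$ that is \'etale outside the reduced invariant divisor $V$ with $K_X+V\sim_{\Q}0$ forces $X\isom\PP^n$ with $V$ a union of $n+1$ hyperplanes, contradicting irreducibility of $V$, so the case is vacuous. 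This citation (or an equivalent global argument) is the missing key ingredient in your proposal.
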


A morphism $f : X \to X$ is {\it polarized} (by $H$) if
$$f^*H \sim qH$$
for some ample Cartier divisor $H$
and some $q > 0$; then
$$\deg(f) = q^{\dim X} .$$
For instance,
every non-constant endomorphism of a projective
variety $X$ of Picard number $\rho(X) = 1$, is polarized;
an $f$-stable subvariety $X \subset \BPP^n$ for a non-constant endomorphism
$f : \BPP^n \to \BPP^n$, has the restriction $f_{|X} : X \to X$ polarized by the hyperplane;
the multiplication map
$$m_A : A \to A  \,\,\,\, (x \mapsto m x)$$ (with $m \ne 0$)
of an abelian variety $A$ is polarized by any $H = L + (-1)^*L$ with $L$ an ample divisor,
so that $m_A^*H \sim m^2H$.

In Theorems \ref{ThE'} and \ref{ThG},
we give upper bounds for the number of $f^{-1}$-stable prime divisors on
a (not necessarily smooth) projective variety; the bounds are optimal,
and the second possibility in Theorem \ref{ThG}(2) does occur
(cf.~Examples \ref{ex1ThA} and \ref{ex1ThG}).
One may remove Hyp(A)
in Theorem \ref{ThG},
when the Picard number $\rho(X) = 1$, or $X$ is a weak $\BQQ$-Fano variety,
or the closed cone $\overline{\NE}(X)$ of effective curves has
only finitely many extremal rays
(cf.~Remark \ref{rThB}).
Denote by
$$N^1(X) := \NS(X) \otimes_{\BZZ} \BRR$$
the {\it N\'eron-Severi group}
(over $\BRR$) with $\rho(X) : = \rank_{\BRR} N^1(X)$ the {\it Picard number}.

\begin{theorem}\label{ThG}
Let $X$ be a projective variety of dimension $n$ with
only $\BQQ$-factorial Kawamata log terminal singularities,
and $f : X \to X$ a polarized endomorphism with $\deg(f) = q^n > 1$.
Assume $Hyp(A)$ : either ${f^*}_{|N^1(X)} = q \, \id_{N^1(X)}$, or $n \le 3$.
Then we have $($with $\rho := \rho(X))$:
\begin{itemize}
\item[(1)]
Let $V_i \subset X$ $(1 \le i \le c)$ be prime divisors with $f^{-1}(V_i) = V_i$.
Then $c \le n + \rho$. Further, if $c \ge 1$,
then the pair $(X, \, \sum V_i)$ is log canonical and $X$ is uniruled.
\item[(2)]
Suppose
$c \ge n+\rho-2$. Then either
$X$ is rationally connected,
or there is a fibration $X \to E$ onto an elliptic curve $E$
so that every fibre is normal and rationally connected and
some positive power $f^k$ descends to an $f_E : E \to E$
of degree $q$.
\item[(3)]
Suppose that
$c \ge n+\rho-1$. Then $X$ is rationally connected.
\item[(4)]
Suppose that
$c \ge n+\rho$. Then $c = n+\rho$, $($for some $t > 0)$
$$K_X + \sum_{i=1}^{n+\rho} V_i \, \sim_{\BQQ} \, 0, \hskip 2pc
{(f^t)^*}_{| \Pic(X)} = q^t \, \id_{| \Pic(X)} \, ,$$
$f$ is \'etale outside $\, (\cup \, V_i) \, \cup \, f^{-1}(\Sing X)$
$($and $X$ is a toric surface with $\sum V_i$ its boundary divisor,
when $\dim X = 2)$.
\end{itemize}
\end{theorem}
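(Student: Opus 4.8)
The plan is to feed a single numerical identity, coming from the ramification divisor of $f$, into an induction on $\dim X + \rho(X)$ whose base case is Theorem \ref{ThE'}, and then to read off the finer structure in (2)--(4) from how much slack remains in that induction. First I would normalize the numerical action: under $Hyp(A)$ I may assume (replacing $f$ by a power when $n \le 3$, where the eigenvalues of ${f^*}_{|N^1(X)}$ all have absolute value $q$ and in low dimension a power is forced to be scalar) that ${f^*}_{|N^1(X)} = q\,\id$. From the Riemann--Hurwitz identity $K_X \sim f^* K_X + R_f$ with $R_f \ge 0$ the ramification divisor, together with $f^*[K_X] = q[K_X]$, I obtain the numerical identity $[R_f] = (q-1)[-K_X]$. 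Since $f^{-1}(V_i) = V_i$ with $V_i$ prime and $\Q$-Cartier, $f^* V_i = e_i V_i$ with $e_i [V_i] = q[V_i]$, so the ramification index of $f$ along each $V_i$ is $q$ and $R_f \ge (q-1)\sum_i V_i$. Combining, $-(K_X + \sum_i V_i) \equiv \tfrac{1}{q-1}\bigl(R_f - (q-1)\sum_i V_i\bigr)$ is numerically an effective divisor; in particular $-(K_X+\sum_i V_i)$ is pseudoeffective. This is the engine driving everything.

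For part (1): log canonicity of $(X,\sum V_i)$ would follow from the dynamical stability of the pair, since $f^{-1}(\sum V_i) = \sum V_i$ with ramification index $q$ along each $V_i$ means that pulling back by the expanding map $f$ and iterating keeps the discrepancies of the reduced, $f$-invariant boundary from escaping to $-\infty$, pinning them at the log canonical threshold. Uniruledness is then immediate from the engine: when $c \ge 1$ the class $\sum V_i$ is a nonzero effective divisor and $-K_X \equiv \sum V_i + (\text{pseudoeffective})$, so $K_X$ cannot be pseudoeffective and $X$ is uniruled by Boucksom--Demailly--P\u{a}un--Peternell. The bound $c \le n+\rho$ I would prove by induction on $\dim X + \rho(X)$. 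Because ${f^*}=q\,\id$ fixes every ray of $\overline{\NE}(X)$, any $K_X$-negative extremal ray (which exists as $K_X$ is not nef) is $f$-stable; contracting it gives $\pi \colon X \to Y$ and, by descent of polarized endomorphisms, a polarized $g\colon Y\to Y$ with $\rho(Y)=\rho-1$. If $\pi$ is divisorial its exceptional divisor is $f^{-1}$-stable, so at most one $V_i$ is lost while the rest push forward to distinct $g^{-1}$-stable divisors, and induction gives $c\le n+\rho$. If $\pi$ is of fibre type I split the $V_i$ into those dominating $Y$ and those mapping to divisors of $Y$: the latter are bounded by $\dim Y+\rho(Y)$ by induction, the former restrict on a general fibre $F$ (of relative Picard number one) to $f_{|F}$-stable divisors, bounded by $\dim F+1$ via Theorem \ref{ThE'}, and the two bounds add to $n+\rho$. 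The base case $\rho=1$ is exactly Theorem \ref{ThE'}.

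For parts (2)--(4) I would track the deficit $\delta := n+\rho-c \ge 0$ through the full $f$-equivariant MMP, ending at a Mori fibration on a model $X'$ over a non-uniruled base $Z$ on which $f$ induces a polarized $g_Z$; rational connectedness of $X$ is equivalent to $Z$ being a point, since the fibres are rationally connected and the birational steps are harmless. When $\dim Z \ge 1$ the engine applied to $Z$ makes $-K_Z$ pseudoeffective while non-uniruledness makes $K_Z$ pseudoeffective, so $K_Z \equiv 0$ and, by the structure theory of polarized endomorphisms, $Z$ is an abelian variety up to finite \'etale cover. Such a $Z$ carries no $g_Z^{-1}$-stable prime divisor (a degree $>1$ isogeny cannot fix an irreducible divisor), so all invariant divisors of $X$ are horizontal; counting them against the relative Picard number gives $c \le (n-\dim Z)+(\rho-\rho(Z))$, and $\rho(Z)\ge 1$ yields the crucial estimate $\delta \ge \dim Z+1$. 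Hence $\delta \le 1$ forces $\dim Z=0$, i.e. $X$ rationally connected, which is part (3) and the rationally connected alternatives of (2) and (4); if $\delta=2$ the only other option is $\dim Z=1$, and a non-uniruled curve with $K_Z\equiv 0$ admitting a degree-$q$ polarized endomorphism is an elliptic curve $E$, giving the fibration $X\to E$ with normal rationally connected fibres and the descended $f^k\colon$ over $f_E$ of degree $q$ of part (2). Finally, when $\delta=0$ the estimate leaves no slack: the effective divisor $R_f-(q-1)\sum V_i$ must vanish, so $f$ is \'etale outside $(\cup V_i)\cup f^{-1}(\Sing X)$ and $K_X+\sum V_i\equiv 0$; promoting this to $\sim_{\BQQ}0$ and ${f^*}=q\,\id$ to ${(f^t)^*}=q^t\,\id$ on $\Pic(X)$ is achieved by absorbing torsion into a suitable power $f^t$, and in dimension two the complexity-zero log Calabi--Yau pair $(X,\sum V_i)$ is toric.

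The hard part will be the quantitative deficit estimate $\delta \ge \dim Z+1$ and everything it rests on: running the MMP $f$-equivariantly through flips while simultaneously descending the polarized endomorphism and tracking the invariant divisors, controlling the horizontal invariant divisors by a relative version of Theorem \ref{ThE'}, and invoking the $K_Z\equiv 0$ structure theory to show that an abelian (in particular elliptic) base carries no invariant divisor and therefore wastes exactly $\dim Z+1$ of the budget. The surface toric characterization in (4) --- that a complexity-zero log canonical log Calabi--Yau surface pair is toric --- is the other delicate point, and together with the descent through flips is where I expect the real work to lie.
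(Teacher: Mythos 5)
Your overall architecture --- the ramification identity as engine, an $f$-equivariant MMP, a vertical/horizontal split of the invariant divisors against a Fano fibration, and Graber--Harris--Starr to propagate rational connectedness --- is essentially the paper's, and your telescoping deficit estimate $\delta \ge \dim Z + \rho(Z)$ over a non-uniruled bottom $Z$ is a reasonable repackaging of the paper's induction on dimension. But three steps are genuinely broken or missing. First, your opening normalization of Hyp(A) is unjustified: for $n \le 3$ it is \emph{not} known that a power of $f$ acts as $q\,\id$ on $N^1(X)$. The eigenvalues of $f^*$ all have modulus $q$, but they need not be $q$ times roots of unity, so no power need be scalar; the paper only gets scalar action after a power under extra hypotheses such as finitely many extremal rays (Remark \ref{rThB}(2)), and it handles $n \le 3$ instead via the equivariant MMP results of \cite{uniruled} and \cite[Proposition 3.6.8]{ENS}, which do not require scalar action. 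Since everything downstream in your proposal (invariance of extremal rays, descent of $f$, the identity $[R_f] = (q-1)[-K_X]$, the engine itself) presupposes the scalar action, the case $n \le 3$ of Hyp(A) is not covered by your argument.

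Second, your bound on horizontal invariant divisors restricts $f$ to ``a general fibre $F$,'' but $f$ does not restrict to a general fibre; one needs a fibre that $f$ stabilizes. The paper produces one from Fakhruddin's density of periodic points \cite[Theorem 5.1]{Fr}, taking a general $f_Y$-fixed point $y_0$ and working on $W_0 = \pi^{-1}(y_0)$; moreover on $W_0$ the restrictions of the $V_i$ are ample and $\Q$-Cartier but need be neither Cartier nor prime, so the correct input is (the proof of) Proposition \ref{ThF}, not Theorem \ref{ThE'}, whose Cartier hypothesis fails --- the same correction applies to your base case $\rho(X)=1$. Third, and most seriously, in part (4) you assert that $\delta = 0$ ``leaves no slack,'' hence $R_f - (q-1)\sum V_i = 0$ and $K_X + \sum V_i \equiv 0$. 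This does not follow: $\Delta := R_f - (q-1)\sum V_i$ is not one of the counted divisors, and tightness of a count of invariant prime divisors says nothing about it. This is where the bulk of the paper's proof lives: one first proves $K_W + \sum_j V(r)_j \sim_{\BQQ} 0$ on the terminal model $W$ via Proposition \ref{ThF}, then pulls back through the MMP, writing $K_X + \sum_j V_j \sim_{\BQQ} E_2 - E_1$ with $E_i$ exceptional and supported in $\cup V_j$, and kills $E_1$, $E_2$, $\Delta$ using $f^*E_i = qE_i$ together with $\kappa(X, E_1) = 0$; when the base $Y$ is positive-dimensional one needs in addition the tower of normalizations $W(k)$ and a pushforward to $Y$ to show $\Delta(r) = \pi^*\Delta_Y = 0$ against the inductive relation $K_Y + \sum G_i \sim_{\BQQ} 0$. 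None of this is in your proposal, so part (4) (including the \'etale statement) is unproven. Smaller issues: your part-(1) induction on $\dim X + \rho(X)$ does not decrease under flips, so termination must come from BCHM-type MMP with scaling as in the paper; and ``absorbing torsion into a power'' is not how $\equiv$ is promoted to $\sim_{\BQQ}$ and $N^1$ to $\Pic$ --- the paper uses that a klt rationally connected $X$ is simply connected, whence $\Pic(X)$ is torsion free and $q(X) = 0$.
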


Corollary \ref{ThB} below
is a special case of Theorem \ref{ThB'} and is known for
$X = \BPP^n$ with $n \le 3$ (cf.~\cite{FS}, \cite{nz2});
the smoothness and Picard number one assumption on $X$
are necessary (cf.~Remark \ref{rThB} and Example \ref{ex1ThG}).
In Corollary \ref{ThB}, $X$ is
indeed a Fano manifold; but one would like to know more about the $V$
and even expects $X = \PP^n$ and $V$ be a hyperplane;
see \cite{p3} and the references therein.
Such an expectation is very hard to prove
even in dimension three and proving the smoothness of $V$ is the key,
hence the relevance of Proposition \ref{ThC} below.

\begin{corollary}\label{ThB}
Let $X$ be a projective manifold of dimension $n \ge 2$ and Picard number one,
$f : X \to X$ an endomorphism of degree $> 1$,
and $\, V \subset X$ a prime divisor with $f^{-1}(V) = V$.
Then $X$, $V$ and the normalization $V'$ of $V$ are
all
rationally chain connected.
\end{corollary}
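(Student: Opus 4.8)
The plan is to obtain the Corollary as the promised special case of Theorem~\ref{ThB'}, so the whole task is to verify the three hypotheses of that theorem for a smooth $X$ of Picard number one. Since $X$ is a manifold it has only (terminal, hence) log canonical singularities, so condition~(2) holds automatically and $\Sing X = \emptyset$; and the extra requirement in condition~(3) only ever demands that $X$ be smooth, which it is. Thus (2) and (3) are free, and everything comes down to condition~(1): producing an ample Cartier divisor $H$ with $-K_X \sim_{\BQQ} rH$ and $V \sim_{\BQQ} dH$ for some $r,d \in \BQQ$. This is exactly the assertion, made in the text just before Theorem~\ref{ThB'}, that condition~(1) holds whenever $X$ is $\BQQ$-factorial of Picard number one; since a manifold is $\BQQ$-factorial, one may simply quote it. I will nonetheless indicate the mechanism behind that assertion, which is where the real work lies.

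Because $\rho(X)=1$ the space $N^1(X)$ is a line, so fixing an ample generator $H$ we get numerical proportionalities $-K_X \equiv rH$ and $V \equiv dH$ with $d>0$ at once; the content of condition~(1) is to promote these to $\BQQ$-linear equivalences, i.e. to kill the discrepancy classes, which lie in $\Pic^0(X)\otimes\BQQ$. For $V$ this is a clean eigenvector computation. As $\rho(X)=1$, the endomorphism $f$ is polarized, say $f^*H \sim qH$ with $\deg f = q^n$ and $q>1$; and since $V$ is prime with $f^{-1}(V)=V$, we have $f^*V = mV$ as Cartier divisors for some integer $m\ge 1$. Comparing with $f^*V \equiv d\,f^*H \equiv qd\,H$ forces $m=q$, so $f^*V \sim qV$; hence $\delta := V - dH \in \Pic^0(X)\otimes\BQQ$ satisfies $f^*\delta \sim q\delta$. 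But every eigenvalue of $f^*$ on $\Pic^0(X)\otimes\BQQ$ has modulus $\sqrt q$ (the standard spectral estimate for a polarized endomorphism acting on $H^1(X,\BCC)$), and $\sqrt q \ne q$; therefore $\delta = 0$ and $V \sim_{\BQQ} dH$.

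The canonical class is the delicate point, and I expect it to be the main obstacle: the Hurwitz relation $f^*K_X = K_X - R_f$, with $R_f \ge 0$ the ramification divisor, prevents $-K_X$ from being an exact eigenvector of $f^*$, so the argument above does not transfer. I would instead dispose of $\Pic^0(X)\otimes\BQQ$ altogether by showing $h^1(X,\OO_X)=0$. To do so I would examine the Albanese map $a\colon X \to \Alb(X)$: if $h^1(X,\OO_X)>0$ then $\dim\Alb(X)>0$, and since $\rho(X)=1$ no morphism from $X$ can have positive-dimensional fibres over a positive-dimensional base (the pullback of an ample class from the base would be a nonzero nef, non-ample class, impossible when $N^1(X)$ is a line), so $a$ must be finite onto its image. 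A short analysis then traps $X$ between two excluded cases: if $a$ is \'etale onto an abelian variety then $X$ is itself abelian, which in characteristic $0$ carries no prime divisor $V$ with $f^{-1}(V)=V$ and $\deg f > 1$ (a polarized self-map of an abelian variety is \'etale, so $f^*V$ would be a reduced divisor of class $qV \ne V$); and if $a$ ramifies then $K_X$ is a positive multiple of $H$, making $X$ of general type, which admits no endomorphism of degree $>1$. Hence $h^1(X,\OO_X)=0$, so $\Pic^0(X)\otimes\BQQ=0$, numerical and $\BQQ$-linear proportionality coincide, and condition~(1) holds in full. Theorem~\ref{ThB'} now applies and yields that $X$, $V$ and the normalization $V'$ are all rationally chain connected.
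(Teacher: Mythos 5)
Your overall route is exactly the paper's: deduce Corollary \ref{ThB} by checking the three hypotheses of Theorem \ref{ThB'}, with (2) and (3) free for smooth $X$. Indeed the paper itself offers no more than this reduction, disposing of condition (1) by the bare assertion (made just before Theorem \ref{ThB'}) that it holds when $X$ is $\BQQ$-factorial of Picard number one; you are right that this assertion is not a tautology, since it amounts to promoting numerical proportionality to $\BQQ$-linear proportionality. Your treatment of $V$ is correct (and can be shortened: as $\rho(X)=1$ and $X$ is smooth, the nonzero effective Cartier divisor $V$ is itself ample, so one may take $H:=V$ and reduce condition (1) to the single claim $-K_X\sim_{\BQQ} rV$), and you correctly isolate $-K_X$, equivalently $h^1(X,\OO_X)=0$, as the real issue.

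The gap is in your Albanese dichotomy. After showing that $a\colon X\to\Alb(X)$ is finite onto its image $Y$ (that step is fine), you consider only two cases: $a$ \'etale onto an abelian variety, and $a$ ramified with the Hurwitz conclusion $K_X=R_a>0$. Both implicitly assume $Y=\Alb(X)$: the first branch needs the target to be an abelian variety in order to conclude that $X$ is abelian, and the computation $K_X=a^*K_{\Alb(X)}+R_a=R_a$ needs the canonical class of the target to vanish. But a priori the Albanese image is only a generating subvariety, and it may be a positive-dimensional \emph{proper} subvariety $Y\subsetneq\Alb(X)$; then neither branch applies, and excluding this case requires an ingredient you do not invoke, namely Ueno's theorem on subvarieties of abelian varieties. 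Concretely, let $B$ be the identity component of $\{a\in\Alb(X)\mid a+Y=Y\}$; by Ueno, $Y/B\subset\Alb(X)/B$ is of general type, and it is positive-dimensional (were it a point, $Y$ would be a translate of $B$ and, being generating, all of $\Alb(X)$). Your $\rho(X)=1$ argument forces the surjection $X\to Y/B$ to be finite, so $\dim(Y/B)=\dim X$, hence $B=0$ and $Y$ itself is of general type; then $X$, finite over $Y$, is of general type, contradicting $\deg(f)>1$ exactly as in your second branch. With this case added your argument closes and the corollary follows from Theorem \ref{ThB'}; but as written the dichotomy is not exhaustive, so the key claim $h^1(X,\OO_X)=0$ — and with it condition (1) for $-K_X$ — is not yet established.
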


\begin{corollary}\label{CorD}
With the notation and assumptions in Corollary $\ref{ThB}$,
both $X$ and $V$ are simply connected, while $V'$ has a finite
$($topological$)$ fundamental group.
\end{corollary}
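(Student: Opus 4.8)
The plan is to establish the three assertions separately, in increasing order of difficulty, using throughout the common input supplied by Corollary \ref{ThB} (that $X$, $V$ and the normalization $V'$ are all rationally chain connected) and by Theorem \ref{ThB'} (that $-K_X$ is ample, so that $X$ is a smooth Fano manifold). First I would dispose of $X$ itself: since $X$ is smooth, rational chain connectedness coincides with rational connectedness, and hence by Campana \cite{Cp} and Koll\'ar--Miyaoka--Mori \cite{KoMM} the fundamental group $\pi_1(X)$ is trivial. Thus $X$ is simply connected, and this is the only place where the smoothness of the ambient variety is used in an essential, unconditional way.

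Next I would treat $V$. As $V \sim_{\BQQ} d_iH$ with $d_i > 0$ and $H$ ample, $V$ is an ample effective divisor in the smooth projective variety $X$, so I would invoke the Lefschetz hyperplane theorem in the form valid for a possibly singular ample section of a smooth ambient variety (the stratified Morse theory of Goresky--MacPherson, or Hamm--L\^e): the inclusion induces an isomorphism $\pi_1(V) \cong \pi_1(X)$ as soon as $\dim X \ge 3$, whence $\pi_1(V) = 1$. When $\dim X = 2$ one has $X = \BPP^2$ and $V$ an irreducible rational curve, and here Lefschetz only yields a surjection onto $\pi_1(\BPP^2) = 1$, which is vacuous; in that case I would argue directly that $V$ is unibranch, using that $f$ restricts to a degree-$q$ polarized endomorphism of $V$ and lifts to one of $\BPP^1 = V'$ (a multi-branch point, being carried to a multi-branch point away from the ramification and reproduced under $f^{-1}$, would force infinitely many singular points), so that $\pi_1(V) = 1$ again; alternatively one may quote the settled surface case (\cite{FS}, \cite{nz2}).

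The substantive part is the finiteness of $\pi_1(V')$. Here $V'$ is a \emph{normal} projective rationally chain connected variety, and $f$ lifts to a polarized endomorphism $f' : V' \to V'$ of degree $q^{\,n-1} > 1$. I would first control the finite quotients of $\pi_1(V')$: a connected finite \'etale cover $W \to V'$ is again rationally chain connected (rational curves, being simply connected, lift), and a suitable power of $f'$ lifts to an endomorphism of $W$; an Euler-characteristic computation that is multiplicative in the covering degree, combined with the positivity coming from the Fano/endomorphism structure, bounds $\deg(W/V')$, so that the \'etale fundamental group is finite. The delicate point is the passage from finiteness of the profinite completion to finiteness of the topological $\pi_1(V')$. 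If $V'$ had only rational (for instance klt) singularities one could resolve, reduce to the smooth rationally connected case, and force even triviality; but $V'$ may carry strictly log canonical, non-rational singularities -- the projective cone over an elliptic curve being the instructive model, where $H^{>0}(\SO_{V'})$ need not vanish and no resolution is rationally connected -- and it is exactly these singularities, with their finite but possibly nontrivial local fundamental groups, that account for the statement asserting only finiteness of $\pi_1(V')$ rather than triviality.

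I expect this last step to be the main obstacle: extracting finiteness of $\pi_1(V')$ from the dynamics in the presence of non-rational log canonical singularities, where rational chain connectedness alone is too weak and the smooth machinery of \cite{Cp}, \cite{KoMM} does not apply directly. The key idea I would pursue is that an infinite universal cover of $V'$ would carry a lift of a power of $f'$ that is incompatible with the polarization $f'^*(H|_{V'}) \sim q\,(H|_{V'})$, so that the degree-$>1$ endomorphism $f'$ is precisely what rules out an infinite fundamental group; this is the dynamical replacement for the vanishing theorems that are unavailable outside the rational-singularity range.
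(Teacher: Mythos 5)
Your treatments of $X$ and of $V$ are essentially the paper's own: Campana/Koll\'ar--Miyaoka--Mori for the simple connectedness of the smooth rationally connected $X$, the Lefschetz hyperplane theorem (\cite[Theorem 3.1.21]{La}) for $\pi_1(V)\cong\pi_1(X)$ when $\dim X\ge 3$, and the reduction to $X\cong\BPP^2$ with $V$ a line (via \cite{nz2}) when $\dim X=2$; your alternative ``unibranch'' argument in the surface case is sketchy, but you also offer the citation route, which is what the paper does. The problem is the third assertion, which you yourself call the substantive part: you do not prove it. Your \'etale-cover/Euler-characteristic scheme would at best bound the \emph{finite} quotients of $\pi_1(V')$ (i.e.\ the profinite completion), and you explicitly concede that the passage from that to finiteness of the topological fundamental group is ``the main obstacle,'' offering only a heuristic about incompatibility of an infinite universal cover with the polarization. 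A proof attempt that flags its key step as an unresolved obstacle has a genuine gap there.

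Moreover, the premise on which you build this detour is false: you assert that for a normal, possibly non-klt, rationally chain connected variety ``the smooth machinery of \cite{Cp}, \cite{KoMM} does not apply directly'' and that rational chain connectedness ``alone is too weak.'' In fact the paper's entire argument for $V'$ is the single citation of Campana's result in \cite{Cp}: a rationally chain connected \emph{normal} projective variety has finite topological fundamental group (trivial in the smooth case). No dynamics, no resolution of singularities, and no control of the singularities of $V'$ is needed; rational chain connectedness of $V'$, already supplied by Theorem \ref{ThB'}/Corollary \ref{ThB}, is exactly enough. This is also what explains the asymmetry in the statement -- simple connectedness for the smooth $X$ and $V$, but only finiteness for the normal $V'$ -- which you correctly sensed but attributed to a difficulty that the known result of Campana disposes of in one line.
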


\begin{setup}
{\bf Main ingredients of the proofs}.
The results of Favre \cite{Fa}, Nakayama \cite{ENS} and Wahl \cite{Wah}
are very inspiring about the restriction of the singularity type
of a normal surface
imposed by the existence of an endomorphism of degree $> 1$ on the surface.
For the proof of our results, the basic ingredients are: a log canonical singularity
criterion (Proposition \ref{ThC}), the inversion of adjunction in Kawakita \cite{Kk},
a rational connectedness criterion of Qi Zhang
\cite{Zq} and its generalization in Hacon-McKernan \cite{HM},
the characterization in Mori \cite{Mo} on hypersurfaces in weighted projective spaces, and
the equivariant Minimal Model Program in our early paper \cite{uniruled}.
\end{setup}

Theorems \ref{ThE'} and \ref{ThG} motivate the question below
(without assuming the Hyp(A) in Theorem \ref{ThG}).
Question \ref{Q} (2)
is {\it Shokurov's conjecture} (cf. \cite[Thm 6.4]{Sho}).

\begin{question}\label{Q}
{\rm
Suppose that a projective $n$-fold ($n \ge 3$)
$X$ has only $\BQQ$-factorial Kawamata log terminal singularities,
$f : X \to X$ a polarized endomorphism of degree $> 1$, and $V_i \subset X$
($1 \le i \le s$) prime divisors
with $f^{-1}(V_i) = V_i$.
\begin{itemize}
\item[(1)]
Is it true that
$s \le n + \rho(X)$?
\item[(2)]
If $s = n + \rho(X)$, is it true that
$X$ is a toric variety with $\sum V_i$ its boundary divisor?
\end{itemize}
}
\end{question}

\begin{remark}\label{rThB}
(1) In Corollary \ref{ThB}, it is necessary to assume that
$\rho(X) = 1$ (cf.~Example \ref{ex1ThG}), and $X$ is smooth
or at least Kawamata log terminal (klt).
Indeed, for every projective cone $Y$ over an elliptic curve
and every section $V \subset Y$ (away from the vertex),
there is an endomorphism $f : Y \to Y$ of $\deg(f) > 1$ and with $f^{-1}(V) = V$
(cf.~\cite[Theorem 7.1.1, or Corollary 5.2.3]{ENS}).
The cone $Y$ has Picard number one and a log canonical singularity at its vertex.
Of course, $V$ is an elliptic curve, and is not rationally chain connected.
By the way,
$Y$ is rationally {\it chain} connected, but is not rationally connected.
Observe that $K_Y + V \sim_{\BQQ} 0$, in connection with the condition (3)
in Theorem \ref{ThB'} which is a stronger version of Corollary \ref{ThB}.

(2) Let $X$ be a projective variety with only klt singularities.
If the closed cone $\overline{\NE}(X)$ of effective curves
has only finitely many extremal rays, then every polarized endomorphism
$f : X \to X$ satisfies
$${f^*}_{ | N^1(X)} = q \, \id_{N^1(X)}, \,\,\, \deg(f) = q^{\dim X}$$
after replacing $f$ by its power,
so that we can apply Theorem \ref{ThG} (cf.~\cite[Lemma 2.1]{nz2}).
For instance, if $X$ or $(X, \Delta)$ is weak $\BQQ$-Fano, i.e., $X$ (resp. $(X, \Delta)$)
has only klt singularities and $-K_X$ (resp. $-(K_X + \Delta)$) is nef and big,
then $\overline{\NE}(X)$ has only finitely many extremal rays.

(3) By Example \ref{ex1ThA},
it is necessary to assume the local factoriality of $X$ or the
Cartier-ness of $V_i$ in Theorem \ref{ThE'},
even when $X$ has only klt singularities.
We remark that
a $\BQQ$-factorial Gorenstein terminal threefold is locally factorial.
\end{remark}

A smooth hypersurface $X$ in $\BPP^{n+1}$ with $\deg(X) \ge 3$ and $n \ge 2$,
has no endomorphism $f_X : X \to X$ of degree $> 1$
(cf. \cite{Be}, \cite{CL}).
However, singular $X$ may have plenty of endomorphisms $f_X$ of arbitrary degrees
as shown in Example \ref{ex1ThA} below.

\begin{example}\label{ex1ThA}
{\rm
We now construct many polarized endomorphisms for some degree $n+1$ singular hypersurface
$X \subset \BPP^{n+1}$.
Let
$$f = (F_0, \dots, F_n) : \BPP^n \to \BPP^n \,\, (n \ge 2)$$
with $F_i = F_i(X_0, \dots, X_n)$ homogeneous,
be any endomorphism of degree $q^n > 1$,
such that $f^{-1}(S) = S$ for a reduced degree $n+1$ hypersurface
$S = \{S(X_0, \dots, X_n) = 0\}$.
So $S$ must be normal crossing and linear: $S = \sum_{i=0}^{n} S_i$
(cf.~\cite[Thm 1.5 in arXiv version \bf{1}]{nz2}).
Thus we may assume that
$f = (X_0^q, \dots, X_n^q)$ and
$S_i = \{X_i = 0\} $.
The relation $S \sim (n+1)H$ with $H \subset \BPP^n$ a hyperplane,
defines
$$\pi : X = Spec \oplus_{i=0}^n \SO(-iH) \to \BPP^n$$
which is
a Galois $\BZZ/(n+1)$-cover
branched over $S$ so that $\pi^*S_i = (n+1)T_i$ with the restriction
$\pi_{|T_i} : T_i \to S_i$ an isomorphism.

This $X$ is identifiable with the degree $n+1$ hypersurface
$$\{Z^{n+1} = S(X_0, \dots, X_n)\} \, \subset \, \BPP^{n+1}$$
and has singularity of type $z^{n+1} = xy$
over the intersection points of $S$ locally defined as $xy = 0$.
We may assume that $f^*S(X_0,\dots, X_n) = S(X_0,\dots, X_n)^q$ after replacing
$S(X_0, \dots, X_n)$
by a scalar multiple, so
$f$ lifts to an endomorphism
$$g = (Z^q, F_0, \dots, F_n)$$
of $\BPP^{n+1}$
(with homogeneous coordinates $[Z, X_0, \dots, X_n]$),
stabilizing $X$, so that $g_X := g_{|X} : X \to X$ is a polarized endomorphism
of $\deg(g_X) = q^n$ (cf.~
\cite[Lemma 2.1]{nz2}). Note that
$g^{-1}(X)$ is the union of $q$ distinct hypersurfaces
$$\{Z^{n+1} = \zeta^i S(X_0, \dots, X_n)\} \, \subset \, \BPP^{n+1}$$
(all isomorphic to $X$), where $\zeta := \exp( 2 \pi \sqrt{-1}/q)$.

This $X$ has only Kawamata log terminal singularities
and $\Pic X = (\Pic \BPP^{n+1})_{|X}$ ($n \ge 2$) is of rank one (using Lefschetz type theorem
\cite[Example 3.1.25]{La} when $n \ge 3$).
We have $f^{-1}(S_i) = S_i$ and $g_X^{-1}(T_i) = T_i$ ($0 \le i \le n$).
Note that $(n+1)T_i = \pi^*S_i$ is Cartier, but $T_i$ is not Cartier;
of course $X \not\isom \PP^n$ (compare with Theorem \ref{ThE'}).

If $n = 2$, the relation $(n+1)(T_1 - T_0) \sim 0$ gives rise to an \'etale-in-codimension-one
$\BZZ/(n+1)$-cover
$$\tau: \BPP^{n} \isom \widetilde{X} \to X$$
so that $\sum_{i=0}^{n} \tau^{-1}T_i$
is a union of $n+1$ normal crossing hyperplanes;
indeed, $\tau$ restricted over $X \setminus \Sing X$, is its universal cover
(cf.~\cite[Lemma 6]{MZ}),
so that $g_X$ lifts up to $\widetilde{X}$.
A similar result {\it seems} to be true for $n \ge 3$,
by considering the `composite' of the $\BZZ/(n+1)$-covers given by $(n+1)(T_i - T_0) \sim 0$
($1 \le i < n$); see Question \ref{Q}.
}
\end{example}

The simple Example \ref{ex1ThG} below shows that the conditions in Theorem \ref{ThG} (2)(3), or
the condition $\rho(X) = 1$ in Corollary \ref{ThB}, is necessary.

\begin{example}\label{ex1ThG}
{\rm
Let $m_A : A \to A$ ($x \mapsto mx$) with $m \ge 2$, be the multiplication map of an abelian
variety $A$ of dimension $\ge 1$ and Picard number one.
Let $v \ge 1$, $q := m^2$ and
$$g: \BPP^{v} \to \BPP^{v} \,\,\,\, ([X_1, \dots, X_{v+1}] \mapsto [X_1^q, \dots, X_{v+1}^q]) .$$
Then
$$f = (m_A \times g) : X = A \times \BPP^{v} \to X$$
is a polarized endomorphism
with ${f^*}_{| N^1(X)} = \diag[q, \, q]$,
and $f^{-1}$ stabilizes $v+1$ prime divisors $V_i = A \times \{X_i = 0\} \subset X$
and no others; indeed, $f$ is \'etale outside $\cup \, V_i$.
Note that $X$ and $V_i \isom A \times \BPP^{v-1}$
are not rationally chain connected, and $$v+1 = \dim X + \rho(X) - (1 + \dim A) .$$
}
\end{example}

\noindent
{\bf Acknowledgement.} I would like to thank
N. Nakayama for the comments and informing me about Shokurov's conjecture
(cf. \ref{Q}) and Wahl's result \cite[Corollary, page 626]{Wah},
and the referee for very careful reading and valuable suggestions to improve the paper.

\section{Proofs of Theorems \ref{ThE'} $\sim$ \ref{ThG}}

We use the standard notation in Hartshorne's book and \cite{KM} or \cite{KMM}.
For a finite morphism $f : X \to Y$ between normal varieties (especially for
a surjective endomorphism $f : X \to X$ of a normal projective variety $X$),
we can define
the pullback $f^*L$ on $X$ of a Weil divisor $L$ on $Y$, as the Zariski-closure of $(f_{|U})^*(L_{|V})$
where $U \subset X$ (resp. $V \subset Y$) is a smooth Zariski-open subset
of codimension $\ge 2$ in $X$ (resp. $Y$). When $L$ is $\Q$-Cartier,
our $f^*L$ coincides with the usual pullback (or total transform) of $L$.

In \S 2, we shall prove $\ref{ThE'} \sim \ref{CorD}$
in the Introduction, and Propositions \ref{ThC}, \ref{ThC'} and \ref{ThF} below.

The following log canonical singularity criterion is
frequently used in proving the main results
and should be of interest in its own right.

\begin{proposition}\label{ThC}
Let $X$ be a normal $($algebraic or analytic$)$ variety, $f: X \to X$ a surjective endomorphism
of $\deg(f) > 1$ and $(0 \ne )$ $D$ a reduced divisor with
$f^{-1}(D) = D$. Assume:
\begin{itemize}
\item[(1)]
$X$ is log canonical around $D$ {\rm(cf.}~\cite[Definition~2.34]{KM}$)$;
\item[(2)]
$D$ is $\BQQ$-Cartier; and
\item[(3)]
$f$ is ramified around $D$.
\end{itemize}
Then the pair $(X, D)$ is log canonical around $D$.
In particular, $D$ is normal crossing outside the
union of $\Sing X$ and a codimension three subset of $X$.
\end{proposition}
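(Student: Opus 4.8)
The plan is to reduce everything to the behaviour of the finite morphism $f$ on a small neighbourhood of $D$ and to run a log canonical threshold argument powered by the ramification formula. Since $X$ is log canonical around $D$ and $D$ is $\Q$-Cartier, $K_X$ and $K_X + tD$ are $\Q$-Cartier near $D$ for every $t$, so all the pairs below make sense. Write the ramification formula $K_X = f^*K_X + R_f$ with $R_f = \sum_E (e_E - 1)E$, where $e_E$ is the ramification index of $f$ along a prime divisor $E$. Because $f^{-1}(D) = D$, the map $f$ sends each component $D_j$ of $D$ onto a component $D_{\tau(j)}$, the support of $f^*D$ is exactly $D$, and I read condition (3) as saying that every $e_j := e_{D_j} \ge 2$ (that is, $f$ ramifies along every component of $D$).

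For a boundary $aD$ I define the log pullback $\Delta'$ by $K_X + \Delta' = f^*(K_X + aD)$, so that $\Delta' = f^*(aD) - R_f$. A direct computation gives the coefficient of $\Delta'$ along $D_j$ as $a\,e_j - (e_j - 1) = 1 - e_j(1-a)$, while off $D$ the divisor $\Delta'$ only picks up the nonpositive coefficients of $-R_f$. The elementary identity $1 - e_j(1-a) - a = (1-a)(1-e_j)$ shows that, once $a < 1$ and $e_j \ge 2$, the coefficient of $\Delta'$ along $D_j$ is strictly smaller than $a$. This downward gap on the coefficients is the engine of the proof.

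Now suppose, for contradiction, that $(X, D)$ is not log canonical around $D$, and let $c^\ast \in [0,1)$ be the log canonical threshold, i.e. the largest $c$ with $(X, cD)$ log canonical near $D$ (it exists, and is $< 1$, since $(X,0)$ is lc by (1) while $(X,D)$ is not). Choose $a := 1 - (1-c^\ast)/e_{\min}$, where $e_{\min} = \min_j e_j \ge 2$; then $c^\ast < a < 1$. Taking the boundary $aD$, the coefficient $1 - e_j(1-a)$ of the log pullback $\Delta'$ along each $D_j$ is $\le c^\ast$ (as $e_j \ge e_{\min}$), so near $D$ one has $\Delta' \le c^\ast D$ with all coefficients $\le 1$; since $(X, c^\ast D)$ is lc near $D$ and lowering boundary coefficients preserves log canonicity, $(X, \Delta')$ is lc near $D$. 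The crepant relation $K_X + \Delta' = f^*(K_X + aD)$ together with the equivalence of log canonicity under finite crepant pullbacks (\cite[Prop.~5.20]{KM}) then forces $(X, aD)$ to be lc near $D$. As $a > c^\ast$, this contradicts the maximality of the threshold. Hence $(X, D)$ is log canonical around $D$.

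The main obstacle is the clean invocation of the finite-pullback equivalence for log canonicity: one must verify its hypotheses ($f$ finite surjective between normal varieties, $K_X + aD$ a $\Q$-Cartier divisor near $D$) and handle the fact that the log pullback $\Delta'$ carries negative coefficients off $D$ — harmless for lc, but requiring the comparison $\Delta' \le c^\ast D$ to be read correctly near $D$. Pinning down the reading of hypothesis (3) as $e_j \ge 2$ for every component is equally essential, since without ramification along each component the threshold cannot be pushed strictly past $c^\ast$. Finally, the normal crossing conclusion is the standard structure theory of log canonical pairs with reduced boundary: at a point of the smooth locus of $X$ of codimension $\le 2$, an lc pair $(X, D)$ with $D$ reduced forces $D$ to be smooth in codimension one and an ordinary transversal crossing of at most two branches in codimension two, so $D$ is normal crossing away from $\Sing X$ and a subset of codimension $\ge 3$; I would cite the classification of lc surface pairs in \cite{KM} for this last step.
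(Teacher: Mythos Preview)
Your argument is correct and follows the same strategy as the paper: both assume the log canonical threshold $c$ of $(X,D)$ is $<1$, use the ramification formula to produce a crepant relation $K_X + \Gamma' = f^*(K_X + tD)$ with $\Gamma'$ strictly smaller than $tD$ along the components of $D$, and then invoke \cite[Proposition~5.20]{KM} together with \cite[Lemma~2.27]{KM} to reach a contradiction. The only cosmetic difference is that the paper sets $t=c$ and tracks a specific exceptional divisor $E_2$ of discrepancy $-1$ together with its lift $E_1$, whereas you package the contradiction via a well-chosen $a>c^\ast$ for which the pulled-back boundary drops below $c^\ast D$; both routes use exactly the same ingredients.
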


Proposition \ref{ThC'} below is used in the proof of Theorem \ref{ThE'}.
When $\dim X = 2$, Propositions \ref{ThC} and \ref{ThC'} are shown by
Nakayama \cite[Theorem 4.3.1]{ENS}, and the proof of \cite[Lemma 2.7.9]{ENS}
which seems to be effective in higher dimensions, as commented by Nakayama
(cf. also Wahl \cite[page 626]{Wah} and
Favre \cite[Theorem B]{Fa}); but the {\it log canonical modification} used in \cite[Theorem 4.3.1]{ENS}
was not available then in higher dimensions. To avoid such problem,
in our proof of Propositions \ref{ThC} and \ref{ThC'},
we compute log canonical threshold and discrepancy in the spirit of \cite[Proposition 5.20]{KM}.

\begin{proposition}\label{ThC'}
Let $X$ be a normal $($algebraic or analytic$)$ variety, $f: X \to X$ a surjective endomorphism
of $\deg(f) > 1$ and $(0 \ne )$ $D$ a reduced divisor with
$f^{-1}(D) = D$. Assume:
\begin{itemize}
\item[(1)]
There are effective $\BQQ$-divisors $G$ and $\Delta$ such that
the pair $(X, G)$ has only log canonical singularities around $D$,
and $K_X + G + D = f^*(K_X + G + D) + \Delta$,
i.e., the ramification divisor $R_f = f^*(G+D) - (G+D) + \Delta$;
\item[(2)]
$D$ is $\BQQ$-Cartier; and
\item[(3)]
$f$ is ramified around $D$.
\end{itemize}
Then the pair $(X, G)$ has only
purely log terminal singularities around $D$ \text{\rm (cf. \cite[Def 2.34]{KM})}.
In particular, the structure sheaves
$\OO_X$ and $\OO_D$
are Cohen-Macaulay around $D$.
\end{proposition}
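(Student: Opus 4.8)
My plan is first to reduce to a purely singularity-theoretic statement: it suffices to prove that the pair $(X, G+D)$ is purely log terminal (plt) in a neighbourhood of $D$. Since $D$ is $\BQQ$-Cartier, $K_X + G + D$ is $\BQQ$-Cartier, so every discrepancy and pullback below is defined. Once $(X, G+D)$ is plt near $D$, removing the effective divisors $D$ and then $G$ only raises discrepancies, so $(X,G)$ is plt (in fact klt where $\lfloor G\rfloor = 0$) and $X$ itself is klt near $D$; hence $X$ has rational singularities (Elkik, Kawamata--Viehweg vanishing) and $\OO_X$ is Cohen--Macaulay there. Moreover plt-adjunction shows $D = \lfloor G+D\rfloor$ is normal and its different makes it klt, so $D$ has rational, hence Cohen--Macaulay, singularities and $\OO_D$ is Cohen--Macaulay. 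Thus the whole content is the plt-ness of $(X, G+D)$ near $D$.

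\textbf{The crepant relation and the finite-map comparison.} Combining the ramification formula $K_X = f^*K_X + R_f$ with hypothesis (1), I would record the crepant identity $K_X + B = f^*(K_X + B) + \Delta$ with $B := G+D$ and $\Delta \ge 0$, and then compute coefficients along each component $D_j$ of $D$: writing $e_j\ge 1$ for the ramification index of $f$ along $D_j$, one finds $f^*D - D = \sum_j (e_j-1)D_j \ge 0$ and that $\Delta$ has coefficient $0$ along every $D_j$. So the ramification of $f$ along $D$ is recorded entirely by the $e_j$, while $\Delta$ lives off $D$. The engine is the comparison of discrepancies through the finite map $f$, in the spirit of \cite[Proposition 5.20]{KM}: for a divisor $E$ over $X$ and a divisor $E'$ over the source with $f(c_X(E')) = c_X(E)$ and ramification index $r \ge 1$ along $E'$,
\[
1 + a(E', X, B - \Delta) = r\,\bigl(1 + a(E, X, B)\bigr),
\]
whence, using $\Delta \ge 0$, one gets $1 + a(E', X, B) = r\,(1 + a(E, X, B)) - \nu_{E'}(\Delta)$.

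\textbf{Log canonical threshold iteration.} Set $c := \sup\{\,t \ge 0 : (X, G+tD) \text{ is lc near } D\,\}$, which is $\ge 0$ because $(X,G)$ is lc by (1); the goal is $c \ge 1$, i.e. that $(X, G+D)$ is lc near $D$. Suppose $c < 1$ and pick an lc place $E_0$ of $(X, G+cD)$. Applying the comparison at parameter $c$ --- where the transferred boundary picks up the extra effective term $(1-c)\sum_j(e_j-1)D_j$ coming from $f^*D - D$ --- I would show that every lc place of $(X, G+cD)$ has a preimage under $f$ that is again an lc place and whose centre meets neither $\Supp\,\Delta$ nor any ramified $D_j$ (those with $e_j>1$), since a nonzero value of the corresponding $\nu_{E'}$ would push the discrepancy strictly below $-1$. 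Iterating $f^{-1}$ and using $\deg f = q^n>1$ together with the hypothesis that $f$ is ramified around $D$, the centres of these lc places are forced off an ever larger $f^{-1}$-saturated ramification locus accumulating along $D$, which is impossible. Hence $c \ge 1$ and $(X, G+D)$ is lc near $D$.

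\textbf{Upgrading to plt; the main obstacle.} It remains to rule out an exceptional lc place $E$ of $(X, G+D)$, i.e. one whose centre $W = c_X(E)$ lies in $D$ with $\Codim_X W \ge 2$. Here coefficient-one components $D_j$ are \emph{permitted} in a plt pair, since the factor $e_j$ in $f^*D_j$ balances them exactly in the comparison, whereas an exceptional centre is genuinely moved by $f$: its $f$-orbit consists of lc centres, and the comparison with $r \ge 1$ and $\deg f>1$ should either make the log discrepancies strictly decrease, or produce an infinite chain of distinct exceptional lc centres through a fixed neighbourhood of $D$, contradicting the finiteness of lc centres and the lower bound on discrepancies of an lc pair. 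I expect \emph{this orbit/combinatorial contradiction to be the hard part}: one must control the ramification indices $r$ along the iterated preimages and the way the centres meet the $f^{-1}$-saturation of $\Supp\Delta \cup \bigcup_{e_j>1} D_j$ --- exactly the delicate discrepancy bookkeeping in the spirit of \cite[Proposition 5.20]{KM}. Granting it, $(X, G+D)$ is plt near $D$, and the Proposition follows from the reduction above.
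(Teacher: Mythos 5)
Your opening reduction is where the proof breaks down: you reduce everything to proving that $(X, G+D)$ is plt near $D$, but that statement is \emph{false} under the hypotheses of the Proposition, so the "hard part" you defer to the final step cannot be closed by any argument. Take $X = \BPP^n$, $f$ the power map $[X_0, \dots, X_n] \mapsto [X_0^q, \dots, X_n^q]$, $D = H_0 + H_1$ the sum of two coordinate hyperplanes, $G = 0$ and $\Delta = (q-1)\sum_{i \ge 2} H_i$. Then $f^{-1}(D) = D$, $D$ is Cartier, $f$ is ramified along both components of $D$, $(X,G)$ is lc, and $K_X + G + D = f^*(K_X + G + D) + \Delta$ holds because $R_f = (q-1)\sum_{i=0}^n H_i$; yet the blow-up of $H_0 \cap H_1$ gives an exceptional divisor $E$ with $a(E, X, G+D) = 1 - 2 = -1$ and centre inside $D$, so $(X, G+D)$ is lc but not plt near $D$. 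The exceptional lc places you hope to exclude by an orbit/discrepancy argument genuinely exist: as you yourself note, the ramification indices $e_j$ balance the coefficient-one components of $D$ exactly in the finite-map comparison, so nothing pushes such places off $D$. Consequently your route to the Cohen--Macaulayness of $\OO_D$ (plt adjunction to $D = \lfloor G+D \rfloor$, which would even give normality of $D$ --- also false in this example, where $D$ has two crossing components) collapses as well. Your middle step, the lc-ness of $(X,G+D)$ near $D$, is correct and is essentially Proposition \ref{ThC}, but it is the end of what is true for the pair containing $D$.

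The content of Proposition \ref{ThC'}, and of the paper's proof, is that plt holds for the pair $(X,G)$ with $D$ \emph{removed} from the boundary; this is strictly weaker than plt of $(X,G+D)$, and the removal provides exactly the leverage your scheme lacks. Writing $f^*D_i = q_iD_i$ with $q_i > 1$ (hypothesis (3)), hypothesis (1) yields the crepant relation $K_X + G - \Delta - \sum_i (q_i-1)D_i = f^*(K_X+G)$, whose source boundary has strictly \emph{negative} coefficients along every component of $D$. A single application of $f$ then suffices --- no iteration, no orbit bookkeeping. If $E_2$ were a divisor with $a(E_2, X, G) = -1$ and centre in $D$, choose $E_1$ over the source dominating $E_2$ via a lifting of $f$; \cite[Proposition 5.20]{KM} gives $a(E_1, X, G - \Delta - \sum_i(q_i-1)D_i) + 1 = r\bigl(a(E_2,X,G)+1\bigr) = 0$, while the centre of $E_1$ lies in $f^{-1}(D) = D = \Supp\bigl(\sum_i (q_i-1)D_i\bigr)$, so \cite[Lemma 2.27]{KM} and the lc-ness of $(X,G)$ near $D$ force $a(E_1, X, G - \Delta - \sum_i(q_i-1)D_i) > a(E_1, X, G) \ge -1$, a contradiction. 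The Cohen--Macaulay statements for $\OO_X$ and $\OO_D$ are then quoted from \cite[Corollary 5.25]{KM} for the resulting plt pair, not deduced by adjunction to $D$.
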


\begin{setup}\label{set2.0}
We now prove Propositions \ref{ThC} and \ref{ThC'}.
We prove Proposition \ref{ThC} first.
Since the result is local in nature, we may assume that $X$ is log canonical.
Consider the following {\it log canonical threshold} of $(X, D)$:
$$c := \max\{t \in \BRR \, | \, (X, tD) \,\,\text{\rm is log canonical}\} .$$
Then $0 \le c \le 1$. We may assume that $c < 1$ and shall reach a contradiction late.
Let $D = \sum D_i$ be the irreducible decomposition with $D_i \ne D_j$ when $i \ne j$.
Since $f^{-1}(D) = D$, we may assume that $f^{-1}(D_i) = D_i$ after
replacing $f$ by its power.
Since $f$ is ramified around $D$,
we may write $f^*D_i = q_iD_i$ for some $q_i > 1$.
Thus $$K_X + D = f^*(K_X + D) + \Delta$$ where $\Delta$ is an effective
integral Weil divisor not containing any $D_i$, so that
$$R_f := \sum (q_i-1)D_i + \Delta$$
is the ramification divisor of $f$.
We can write
$$K_X + cD - \Delta - (1-c) \sum (q_i-1)D_i = f^*(K_X + cD).$$
To distinguish the source and target of $f$, we denote
by $f : X_1 = X \to X_2 = X$.
For the pairs $(X_i, \Gamma_i)$ with
$$X_i := X, \hskip 1pc \Gamma_1 := cD - \Delta - (1-c) \sum (q_i-1)D_i$$
(which is $\BQQ$-Cartier because so are $K_X$ and $f^*(K_X + cD)$)
and $\Gamma_2 := cD$, we apply \cite[Proposition 5.20]{KM}.
By the definition of the log canonical threshold $c$, there is an exceptional divisor
$E_2$ (in a blowup of $X_2$) with its image (the centre) contained in $D \subset X_2$
such that the discrepancy
$a(E_2, X_2, \Gamma_2) = -1$.
Let $E_1$ be an exceptional divisor (in a blowup of $X_1$) which dominates $E_2$,
via a lifting $f'$ of $f$, and hence has image (on $X_1$) contained in $D$.
Here we use the assumption that $f^{-1}(D) = D$.
On the one hand, \cite[Proposition 5.20]{KM} shows that
$$Eq(\ref{ThC}.1) \hskip 2pc a(E_1, X_1, \Gamma_1) + 1 = r(a(E_2, X_2, \Gamma_2) + 1) = 0$$
where $r \ge 1$ is the ramification index of $f'$ along $E_1$.
On the other hand,
by \cite[Lemma 2.27]{KM} and noting that $E_1$
has image in $D$ and hence in the support of the {\it effective} divisor
$$\Delta + (1-c) \sum (q_i-1)D_i = cD - \Gamma_1$$
(which is $\BQQ$-Cartier because so is $\Gamma_1$ as mentioned early on),
we have
$$a(E_1, X_1, \Gamma_1) > a(E_1, X_1, cD) \ge -1$$
since $(X, cD)$ is log canonical. This contradicts the display Eq(\ref{ThC}.1) above.
Therefore, $c \ge 1$ and $(X, D)$ is log canonical.
This proves Proposition \ref{ThC}.

For Proposition \ref{ThC'}, consider, in the notation above,
$$K_X + G - \Delta - \sum (q_i-1) D_i = f^*(K_X + G)$$
and pairs $(X, G - \Delta - \sum (q_i-1) D_i)$ and $(X, G)$.
Then, using \cite[Proposition 5.20, Lemma 2.27, Corollary 5.25]{KM}, Proposition \ref{ThC'}
can be proved as above.
\end{setup}

\begin{setup}\label{set2.1}
We prove Theorem \ref{ThB'}.
By the assumption, $f^{-1}$ stabilizes the reduced divisor $V = \sum_i V_i$,
$-K_X \sim_{\Q} rH$ and $V_i \sim_{\Q} d_i H$ for some $r, \, d_i, \, \in \, \Q$ and
an ample Cartier divisor $H$.
Replacing $f$ by its power, we may assume $f^{-1}(V_i) = V_i$ so that
$f^*V_i = q_iV_i$ for some $q_i > 0$. Since $K_X$ and $V_i$'s are all proportional to $H$,
all $q_i$'s are the same, $f^*K_X \sim_{\BQQ} qK_X$ and $f^*H \sim_{\BQQ} qH$ with
$q := q_i = \root{n}\of{\deg(f)} > 1$.
Write
$$K_X = f^*K_X + R_f$$
with $R_f$ the (effective) ramification divisor.
Then
$R_f = (q-1)V+ \Delta$
with $\Delta$ an effective Weil divisor which does not contain any $V_i$.
Thus
$$K_X + V = f^*(K_X + V) + \Delta$$
and
$$0 \le \Delta \sim_{\BQQ} (1-q)(K_X + V) \sim_{\BQQ} (q-1)(r-d)H$$
where $d := \sum d_i$.
So $r \ge d > 0$.
Let
$$\sigma: V_1' \to V_1$$
be the normalization.
By the subadjuction (cf.~\cite[Corollary 16.7]{Ko+}), we have
$$K_{V_1'} + C' = \sigma^*(K_X + V)_{|V_1} \sim_{\BQQ} -(r -d)\sigma^*(H_{|V_1})$$
where $C'$ is the sum of $\sigma^*(V-V_1)_{|V_1}$, some non-negative contribution
from the singularity of the pair $(X, V)$, and
the conductor of $V_1'$ over $V_1$
(an integral effective Weil divisor).
We set $\sigma = \id$ when $V_1$ is normal.

We apply Proposition \ref{ThC} to $(X, D, f) = (X, V, f)$.
Since $f^*V = qV$ with $q > 1$, our
$f$ is ramified along $V$ with ramification index $q$.
Thus all conditions of Proposition \ref{ThC} are satisfied; hence $(X, D)$
is log canonical around $V$.
By \cite[Theorem]{Kk}, the pair $(V_1', C')$ is also log canonical.
If $\Delta > 0$, i.e., $r > d$ ($> 0$), then
both $-K_X \sim_{\BQQ} rH$ and
$$-(K_{V_1'} + C') \sim_{\BQQ} (r-d) \sigma^*(H_{|V_1})$$
are ample,
so $X$, $V_1'$ (and hence $V_1$) are all rationally chain connected
by \cite[Cor 1.3]{HM}.

Suppose $\Delta = 0$, i.e., $r = d$ ($> 0$).
Then $K_X + V \sim_{\BQQ} 0$, and
$f$ is \'etale outside $V \cup f^{-1}(\Sing X)$
since the ramification divisor
$R_f = (q-1)V$ now and by the purity of branch loci.
If $V = V_1$ then $X$ is smooth by the assumption, so
$X = \BPP^n$ and $V$ is a union of $n+1$ hyperplanes
(cf.~\cite[Theorem 2.1]{HN}), contradicting the irreducibility of $V$.
If $V \ge V_1 + V_2$, then $C'' := C' - \sigma^*({V_2}_{|V_1}) \ge 0$,
the pair $(V_1', C'')$ is log canonical (cf.~\cite[Corollary 2.35]{KM})
and $-(K_{V_1'} + C'') \sim_{\BQQ} \sigma^*({V_2}_{|V_1})$ is ample, so
$V_1'$ is rationally chain connected by \cite[Corollary 1.3]{HM}.
This proves Theorem \ref{ThB'}.
\end{setup}

\begin{setup}
We prove Corollary \ref{CorD}.
By a well known result of Campana \cite{Cp}, a rationally chain connected normal projective
variety $Y$ has a finite (topological) fundamental group $\pi_1(Y)$;
further, $\pi_1(Y) = (1)$ for smooth $Y$.
This, Lefschetz hyperplane section theorem
\cite[Theorem 3.1.21]{La} and Theorem \ref{ThB'}
imply Corollary \ref{CorD} except the triviality of $\pi_1(V)$
when $\dim X = 2$. Now assume $\dim X = 2$. Since $X$ is smooth and rationally chain
connected, $X$ is rational. Thus $X \isom$ $\BPP^2$ since
$X$ has Picard number one. Hence $V$, being $f^{-1}$-stabilized,
is a line (cf. e.g. \cite[Thm 1.5 in arXiv version \bf{1}]{nz2}).
So $V \isom \BPP^1$ is simply connected. This proves Corollary \ref{CorD}.
\end{setup}

\begin{setup}
The results below are
used in the proof of Theorem \ref{ThE'} and Proposition \ref{ThF}.

Let us now define {\it numerical equivalence} on a normal projective surface $S$.
First, one can define intersection form on $S$,
using Mumford pullback. To be precise, let $\tau: S' \to S$ be a minimal resolution.
For a Weil divisor $D$ on $S$, define the pullback $\tau^*D := \tau'D + \sum a_i E_i$
where $\tau'D$ is the proper transform of $D$ and $E_i$ are $\tau$-exceptional curves, and
$a_i \in \R$ are uniquely determined (by the negativity of the matrix $(E_i . E_j)$ and)
the condition $\tau^* D . E_j = 0$ for all $j$.
We define the intersection $D_1 . D_2 := \tau^*D_1 . \tau^* D_2$.
Weil divisors $D_1$ and $D_2$ on $S$ are called numerically equivalent
if $D_1 . C = D_2 . C$ for every curve $C$ on $S$.
This way, we have defined an equivalence relation among Weil divisors on $S$.
The equivalence class containing $D$ is called the {\it numerical Weil divisor class}
containing $D$.
\end{setup}

Lemma \ref{pe} below is known to Iitaka, Sommese, Y. Fujimoto, and Nakayama \cite[Lemma 3.7.1]{ENS}, \dots.
We reprove it here for the convenience of the readers.

\begin{lemma}\label{pe}
Let $X$ be a normal projective variety of dimension $n$ and $f : X \to X$ an endomorphism with $\deg(f) \ge 2$.
Supposer that the canonical $($Weil$)$ divisor $K_X$ is pseudo-effective $($see \cite[Ch II, Definition 5.5]{ZDA}$)$.
Then
$f$ is \'etale in codimension one.
\end{lemma}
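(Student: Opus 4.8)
The plan is to show that the ramification divisor $R_f$ of $f$ vanishes, which is precisely the assertion that $f$ is \'etale in codimension one. Since $f$ is a surjective, hence finite, endomorphism of the normal variety $X$, the Hurwitz-type formula $K_X = f^*K_X + R_f$ holds as an equality of Weil divisors, with $R_f$ effective and integral (here $f^*K_X$ is the pullback of the Weil divisor $K_X$ defined in the paragraph preceding Proposition \ref{ThC}). Iterating this relation and using $R_{f^m} = \sum_{i=0}^{m-1} (f^i)^* R_f$, I get, for every $m \ge 1$,
$$K_X = (f^m)^* K_X + \sum_{i=0}^{m-1} (f^i)^* R_f ,$$
where each summand $(f^i)^* R_f$ is again effective and integral. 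Thus it suffices to derive a contradiction from the assumption $R_f \ne 0$.

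The key is to estimate degrees against a fixed very ample divisor. Fix a very ample Cartier divisor $A$ on $X$ and set $\alpha := K_X \cdot A^{n-1}$. Intersecting the displayed identity with the nef complete-intersection curve class $A^{n-1}$ gives
$$\sum_{i=0}^{m-1} (f^i)^* R_f \cdot A^{n-1} = \alpha - (f^m)^* K_X \cdot A^{n-1} .$$
Since $K_X$ is pseudo-effective, so is its pullback $(f^m)^* K_X$ under the finite surjective map $f^m$, and a pseudo-effective Weil class pairs non-negatively with the nef class $A^{n-1}$ (cf.~\cite{ZDA}); in particular $\alpha \ge 0$ and $(f^m)^* K_X \cdot A^{n-1} \ge 0$. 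Hence the partial sums are uniformly bounded:
$$\sum_{i=0}^{m-1} (f^i)^* R_f \cdot A^{n-1} \le \alpha \qquad \text{for all } m \ge 1 .$$

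On the other hand, if $R_f \ne 0$ then each $(f^i)^* R_f$ is a nonzero effective integral divisor, so it has a prime component $Q$ with coefficient $\ge 1$; as $A$ is very ample, $Q \cdot A^{n-1} = \deg_A Q \ge 1$, whence $(f^i)^* R_f \cdot A^{n-1} \ge 1$ for every $i$. Summing, $\sum_{i=0}^{m-1}(f^i)^* R_f \cdot A^{n-1} \ge m$, which contradicts the uniform bound by $\alpha$ as soon as $m > \alpha$. Therefore $R_f = 0$ and $f$ is \'etale in codimension one. The only non-elementary input, and the step I expect to need the most care, is the pseudo-effectivity formalism for Weil divisors: namely that $f^*$ carries pseudo-effective Weil classes to pseudo-effective ones, and that such classes pair non-negatively with the movable curve class $A^{n-1}$. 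Both facts belong to Nakayama's theory \cite{ZDA} of $\sigma$-decompositions, whereas everything else (the ramification formula, the iteration, and the very-ample degree lower bound $\deg_A Q \ge 1$) is elementary.
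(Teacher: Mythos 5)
Your proposal is correct and is essentially the paper's own proof: the same ramification-formula iteration $K_X = (f^m)^*K_X + \sum_{i=0}^{m-1}(f^i)^*R_f$, intersection with a (very) ample class $H^{n-1}$, the lower bound $\ge 1$ per nonzero integral term, and the upper bound from pseudo-effectivity (the paper cites \cite{BDPP} where you cite \cite{ZDA}, but the input is the same non-negativity of pairing). No gaps; the two arguments differ only in which reference is invoked for the pseudo-effectivity formalism.
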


\begin{proof}
Write $K_X = f^*K_X + R_f$ with $R_f \ge 0$ the ramification (integral) divisor,
noting that the pullback $f^*$ is defined at the beginning of \S 2.
Substituting this expression of $K_X$ to the right hand side $(s-1)$-times, we get
$K_X = (f^s)^*K_X + \sum_{i=0}^{s-1} (f^i)^*R_f$.
Take an ample Cartier divisor $H$ on $X$.
If $R_f = 0$, then we are done.
Otherwise,
the pseudo-effectivity of $K_X$ and \cite{BDPP} imply that $(R_f$ being an integral Weil divisor)
$$K_X . H^{n-1} = (f^s)^*K_X . H^{n-1} + \sum_{i=0}^{s-1} (f^i)^*R_f . H^{n-1}
\ge s .$$
Let $s \to \infty$. We get a contradiction.
\end{proof}

\begin{lemma}\label{surf}
Let $S$ be a normal projective surface,
$\tau : S' \to S$ the minimal resolution,
$f: S \to S$ a polarized endomorphism
with $\deg(f) = q^2 > 1$, and $D$, $\Delta$ effective Weil divisors such that
$K_S + D = f^*(K_S + D) + \Delta$, i.e., the ramification divisor $R_f = f^*D - D + \Delta$.
Suppose that $D$ is an integral divisor, and
$\Supp D = \cup_{i=1}^r D_i$ has $r \ge 3$ irreducible components
and $($the dual graph of$)$ it contains a loop. Replacing $f$ by its power, we have:
\begin{itemize}
\item[(1)]
$S$ is klt. The pair $(S, D)$ has only log canonical singularities;
so no three of $D_i$ share the same point.
\item[(2)]
$D$ is reduced, and $f^*D_i = qD_i$ for every $i$.
\item[(3)]
$\Delta = 0$,
$K_S + D \sim 0$, and $f$ is \'etale outside $D \cup f^{-1}(\Sing S)$.
\item[(4)]
$S$ is a rational surface.
Every singularity of $S$ is either Du Val and away from $D$, or is
a cyclic quotient singularity and lies in $\Sing D$.
$\tau^{-1}D$ is a simple loop of $\BPP^1$'s.
\item[(5)]
$f^*L \sim qL$ $($resp.~$f^*L \sim_{\BQQ} qL)$ for every Cartier $($resp.~Weil$)$
divisor $L$ on $S$, so
$f^* = q \, \id$ on $\Pic S$ $($resp. on Weil divisor classes$)$.
\end{itemize}
\end{lemma}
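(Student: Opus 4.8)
The plan is to extract the numerical consequences of polarization, use Lemma \ref{pe} to force $\kappa(S)=-\infty$, and then read off the log Calabi--Yau and singularity structure on the minimal resolution, with the loop in $\Supp D$ as the rigidifying input. I first record the intersection identity: since $f^*H\equiv qH$, the Mumford intersection gives $f^*A\cdot H=\tfrac1q f^*A\cdot f^*H=q\,(A\cdot H)$ and dually $f_*A\cdot H=q\,(A\cdot H)$ for every Weil divisor $A$. Replacing $f$ by a power I may assume $f^{-1}(D_i)=D_i$, so $f^*D_i=e_iD_i$ with $e_i$ the ramification index; intersecting with $H$ forces $e_i=q$, hence $f^*D_i=qD_i$ and $f_*D_i=qD_i$. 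Writing $D=\sum b_iD_i$ with $b_i\ge 1$, the coefficient of $D_i$ in $R_f=f^*D-D+\Delta$ is $q-1$ (the ramification index along $D_i$ being $q$), while computing it from $f^*D-D$ gives $(q-1)b_i$; effectivity of $\Delta$ yields $(q-1)(1-b_i)\ge 0$, so $b_i=1$. This proves that $D$ is reduced, that $f^*D_i=qD_i$ for all $i$ (part (2)), and that $\Delta$ contains no $D_i$.

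Next I would show $K_S$ is not pseudo-effective. If it were, Lemma \ref{pe} would make $f$ \'etale in codimension one, so $R_f=0$; but $R_f=(q-1)D+\Delta\ge (q-1)D\ne 0$ with $q>1$, a contradiction. Hence $\kappa(S)=-\infty$ and the minimal resolution $S'$ is uniruled. To upgrade to rationality I would invoke the loop: a cycle of curves cannot lie in the fibres of a ruling, since fibres of a relatively minimal ruled surface and their blow-ups are trees of rational curves; an irrational ruling would therefore force a component of $D$ dominating the base with positive genus, incompatible with the log Calabi--Yau structure obtained below. This excludes the irrational case and yields the rationality part of (4).

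The crux is that $(S,D)$ is log canonical and that $\Delta=0$, $K_S+D\sim 0$. For log canonicity I would run, in dimension two, the discrepancy computation of Setup \ref{set2.0} and Proposition \ref{ThC} in the form established by Nakayama \cite[Theorem 4.3.1]{ENS}, which does not presuppose $\BQQ$-factoriality --- the real danger here, since a genuine loop threatens non-$\BQQ$-factorial cusp-type points: a place of discrepancy $<-1$ centred in $D$ would, under pullback by $f$, produce one of larger multiplicity, contradicting the fixed numerical budget. Granting log canonicity, $\tau^{-1}D$ is a cycle of rational curves, so its dualizing sheaf is trivial and $(K_S+D)\cdot D=0$. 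Intersecting $K_S+D=f^*(K_S+D)+\Delta$ with $H$ and with $D$ (using $f_*H\equiv qH$, $f_*D=qD$) gives
$$(K_S+D)\cdot H=-\frac{\Delta\cdot H}{q-1},\qquad (K_S+D)\cdot D=-\frac{\Delta\cdot D}{q-1}.$$
The second forces $\Delta\cdot D=0$, and the nefness of $-(K_S+D)$ coming from the lc log Calabi--Yau structure (via the surface MMP of \cite{ENS}) then forces $\Delta=0$ and $K_S+D\equiv 0$, whence $K_S+D\sim 0$. Purity of the branch locus together with $R_f=(q-1)D$ gives that $f$ is \'etale outside $D\cup f^{-1}(\Sing S)$ (part (3)).

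For the structure statements, with $(S,D)$ lc, $K_S+D\sim 0$ and $D$ a cycle, the singularities of $S$ on $D$ are toric cyclic quotients and $\tau^{-1}D$ is a simple loop of $\BPP^1$'s, while discrepancy bookkeeping forces any singularity off $D$ to be canonical, i.e. Du Val; in particular $S$ is klt (part (1) and (4)). For part (5), on the rational surface the $f^*$-eigenclasses $[D_i]$ (eigenvalue $q$) together with $f^*H\equiv qH$ and $f^*(K_S+D)\equiv 0$ pin down $f^*=q\,\id$ on numerical Weil classes and on $\Pic S$ after a further power (cf. \cite[Lemma 2.1]{nz2}). The main obstacle is precisely the middle step --- establishing log canonicity and the exact equalities $\Delta=0$, $K_S+D\sim 0$ \emph{simultaneously}, with neither $\BQQ$-factoriality nor nefness of $-(K_S+D)$ in hand a priori --- and this is where the loop is indispensable and where I would lean on Nakayama's surface analysis \cite{ENS}, which supplies both the log canonical modification making the discrepancy argument run and the classification forcing the boundary to be an anticanonical cycle.
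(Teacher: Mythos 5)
Your opening reductions are sound (the coefficient computation giving $D$ reduced and $f^*D_i=qD_i$, and the use of Lemma \ref{pe} to rule out pseudo-effective $K_S$), but the step you yourself identify as the crux collapses. You derive $\Delta=0$ and $K_S+D\sim 0$ from two inputs that are not available at that point: (i) the claim that log canonicity makes $\tau^{-1}D$ a cycle of rational curves with trivial dualizing sheaf, hence $(K_S+D)\cdot D=0$ --- but this is precisely conclusion (4), and it does not follow from lc plus the existence of a loop (four general lines in $\BPP^2$ form an lc divisor containing loops with $(K+D)\cdot D=4>0$; an lc boundary may also have trees attached to the loop, or loop components of positive genus, e.g.\ a triangle of smooth curves of any genera); and (ii) the ``nefness of $-(K_S+D)$ coming from the lc log Calabi--Yau structure'', which is circular, since that structure is exactly what is being proved. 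Moreover, even granting both (i) and (ii), the deduction fails: $\Delta\cdot D=0$ together with $-(K_S+D)\equiv\Delta/(q-1)$ nef does not force $\Delta=0$, because a nef effective divisor can be disjoint from $D$. The same circularity infects your rationality step: you dismiss irrational rulings as ``incompatible with the log Calabi--Yau structure obtained below'', yet that structure can only be obtained after rationality is in hand.

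The missing idea is the paper's effectivity-versus-anti-effectivity mechanism. Rationality is proved first and directly: $f^*=q\,\id$ on numerical Weil classes after a power (\cite[Theorem 2.7]{uniruled}; this gives the full relation $0\le\Delta\equiv-(q-1)(K_S+D)$, not merely its intersection with $H$), then the classification \cite[Theorem 6.3.1]{ENS} leaves three cases (rational; minimal elliptic ruled; cone over an elliptic curve), and the two non-rational cases are excluded by intersecting that relation with a ruling fibre and invoking pseudo-effectivity of $K_S+D_1+D_2$ (Hartshorne, Ch.\ V, Prop.\ 2.20--2.21). Only then does the key step run: on the minimal resolution, $D'+\Sigma_1$ is reduced, contains a loop, and $S'$ is rational, so Riemann--Roch gives $\chi(K_{S'}+D'+\Sigma_1)\ge 1$ with $h^2=0$, hence $K_{S'}+D'+\Sigma_1\sim G\ge 0$; pushing forward, $K_S+D\sim\tau_*G\ge 0$, which against $K_S+D\equiv-\Delta/(q-1)\le 0$ forces $\Delta=0$ and $K_S+D\sim 0$, and then $0\sim G+\Sigma_2+\Sigma_3$ forces $G=\Sigma_2=\Sigma_3=0$, which is what yields (1) and (4). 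Two smaller gaps: the lemma does not hypothesize $f^{-1}(D)=D$, so your reduction ``$f^{-1}(D_i)=D_i$ after a power'' needs the short argument that $R_f=f^*D-D+\Delta\ge 0$ forces $f^{-1}(\Supp D)=\Supp D$ (this is what the citation to \cite{nz2} supplies); and in (5) the classes $[D_i]$, $[H]$ need not span $N^1(S)$, so your eigenclass argument is not a proof --- the paper uses \cite[Theorem 2.7]{uniruled} and then klt $+$ rational $\Rightarrow\pi_1(S')=(1)\Rightarrow\Pic S$ torsion free to upgrade $\equiv$ to $\sim$.
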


\begin{proof}
For (1), by \cite[Theorem 4.3.1]{ENS} or \cite[Theorem B]{Fa}, both $S$ and the pair $(S, D)$
have only log canonical singularities. We will see that $S$ is klt in (4).

For (2), see \cite[Lemmas 5.3 and 2.1 in arXiv version \bf{1}]{nz2}.

By (2), $f$ is not \'etale in codimension one, and hence
$K_S$ is not pseudo-effective (cf. Lemma \ref{pe} or \cite[Lemma 3.7.1]{ENS}).  So $S'$ is a ruled surface.
Also, $f^* = q \id$, on the numerical Weil divisor classes, after $f$ is replaced by its power
(cf.~\cite[Theorem 2.7]{uniruled}).
Thus
$$Eq(\ref{surf}.1) \hskip 2pc
0 \le \Delta \equiv -(q-1)(K_S + D), \,\,\,\, -K_S \equiv D + \Delta/(q-1).$$

Since $f^* = q \id$ and $K_S$ is not pseudo-effective,
the classification result of \cite[Theorem 6.3.1]{ENS} says that
$S$ is either a rational surface, or an elliptic (smooth minimal) ruled surface,
or a cone over an elliptic curve.
If $S$ is elliptic ruled with $F$ a general fibre, then intersecting $F$ with Eq(\ref{surf}.1) above
and noting that $D$ contains a loop,
we may assume that $F . (D_1 + D_2) = 2$ and $F . \Delta = F . D_j = 0$ ($j = 3, \dots, r$);
then $K_S + D_1 + D_2$ is pseudo-effective by using Hartshorne's book, Chapter V, Propositions
2.20 and 2.21, which gives a contradiction:
$$K_S + D_1 + D_2 \equiv -(\sum_{i \ge 3} D_i + \Delta/(q-1)) < 0 .$$
If $S$ is a cone then $K_S + D_1$ is pseudo-effective,
since $D$ contains a loop and hence we can find
some $D_1 \le D$ horizontal to generating lines, a contradiction as above.

Thus $S$ is a rational surface.
Write
$$\tau^*(K_S + D) = K_{S'} +  D' + \Sigma_1 + \Sigma_2 + \Sigma_3$$
where $D' = \tau'D$ is the proper transform of $D$,
$\Sigma_i \ge 0$,
$$\begin{aligned}
\Supp \Sigma_1 &= \tau^{-1}((\Sing S) \cap (\Sing D)), \\
\Supp \Sigma_2 &= \tau^{-1}((\Sing S) \cap (D \setminus \Sing D)), \\
\Supp \Sigma_3 &\subset \tau^{-1}((\Sing S) \setminus D) .
\end{aligned}$$
By the results on (1) in the first paragraph and \cite[Theorem 9.6]{Ka},
($D$ and hence) $D' + \Sigma_1$
are reduced and contain a loop. Thus
$K_{S'} + D' + \Sigma_1 \sim G \ge 0$ by the Riemann-Roch theorem
(cf. \cite[Lemma 2.3]{CCZ}).
Pushing forward, we get $K_S + D \sim \tau_*G \ge 0$.
This and the displayed Eq(\ref{surf}.1) above imply $\Delta = 0 = \tau_*G$,
so (3) is true by the assumption on $R_f$ and the purity of branch loci.

Since $$0 \sim \tau^*(K_S + D) = K_{S'} +  D' + \Sigma_1 + \Sigma_2 + \Sigma_3
\sim G + \Sigma_2 + \Sigma_3$$
we have $G = 0 = \Sigma_i$ ($i = 2, 3$).
Now (4) follows from $\Sigma_i = 0$ ($i = 2, 3$),
the results on (1) in the first paragraph,
and the Riemann-Roch theorem (cf. the proofs of \cite[Lemmas 2.2 and 2.3]{CCZ}).

(5) follows from \cite[Theorem 2.7]{uniruled}. Indeed,
$S$ is klt and hence $\BQQ$-factorial.
The argument below is valid in any dimension for later use:
since $S$ is klt (and rational, i.e. rational connected), $S'$ is rational (i.e.,
rational connected, cf. \cite[Corollary 1.5]{HM});
thus $\pi_1(S')$ (and hence $\pi_1(S)$) are trivial (hence $q(S) = 0$),
by a well-known result of Campana \cite{Cp};
so $\Pic S$ is torsion free.
\end{proof}

\begin{setup}
{\bf Proof of Theorem \ref{ThE'}}
\end{setup}

By the assumption, $f^{-1}(V_i) = V_i$ and $V_i \equiv d_iH$ for some $d_i > 0$,
so each $V_i$ is an ample Cartier divisor.
Suppose there are $s \ge n+1$ of such $V_i$.
We have $f^*V_i = q V_i$ since
$q^n = \deg(f) = (f^*V_i)^n/V_i^n$.
So $f$ is polarized by $V_1$. We may assume that $H = V_1$,
since all $V_i$ are (numerically) proportional to each other by the assumption.
We shall inductively construct
log canonical pairs $(X_i, D_i)$ ($1 \le i \le n-2$) with $\dim X_i = n-i$.
Let
$$X_0 := X, \hskip 1pc D_0 := \sum_{i=1}^s V_i .$$
By Proposition \ref{ThC}, the pair $(X_0, D_0)$ is log canonical around $D_0$.
Let $\sigma_1 : X_1 \to X_0$ be the normalization of $V_1 \subset X_0$.
Write
$$K_{X_1} + D_1 = \sigma_1^*(K_{X_0} + D_0)$$
so that the pair $(X_1, D_1)$ is again log canonical (cf.~\cite[Theorem]{Kk}).
Let $\Gamma_1 \subset X_1$ be the conductor divisor of $\sigma_1$.
Set $\Gamma_1 = 0$ when $V_1$ is normal.
By the calculation of the {\it different} in \cite[Corollary 16.7]{Ko+},
$$D_1 \ge \Gamma_1 + \sum_{k=2}^s \sigma_1^*V_k ;$$
hence the right hand side is reduced and each of its last $s-1$ term is nonzero
and connected by the
ampleness of $V_k$ ($\equiv d_kH$).
Our $f$ lifts to an endomorphism $f_1 : X_1 \to X_1$
polarized by $\sigma_1^*H$ so that $f_1^{-1}$ stabilizes $\Gamma_1$
and $\sigma_1^*V_k$ ($k \ge 2$)
after replacing $f$ by its power (cf.~\cite[Proposition 5.4 in arXiv version \bf{1}]{nz2}).
We repeat the process.
Let $\sigma_2 : X_2 \to X_1$ be the normalization of an irreducible component
of $\sigma_1^*V_2 \subset X_1$
which meets $\Gamma_1$
when it is nonzero; here we use the ampleness of $V_2$ ($\equiv d_2H$).
Write
$$K_{X_2} + D_2 = \sigma_2^*(K_{X_1} + D_1)$$
so that the pair $(X_2, D_2)$ is again log canonical.
We have
$$D_2 \ge \sigma_2^* \Gamma_1 + \sum_{k=3}^s \sigma_2^*\sigma_1^*V_k .$$
Our $f_1$ lifts to an endomorphism $f_2 : X_2 \to X_2$
polarized by $\sigma_2^*\sigma_1^*H$ so that $f_2^{-1}$ stabilizes
each term of $\sigma_2^* \Gamma_1 + \sum_{k=3}^s \sigma_2^*\sigma_1^*V_k$
after replacing $f$ by its power.
Thus we can construct normalizations (onto the images)
$$\sigma_i : X_i \to X_{i-1} \hskip 1pc (1 \le i \le n-2) ,$$
log canonical pairs $(X_i, D_i)$ with
$$D_i \ge (\sigma_2 \cdots \sigma_{i})^*\Gamma_1 + \sum_{k = i+1}^s (\sigma_1 \cdots \sigma_{i})^*V_k$$
and endomorphisms $f_i : X_i \to X_i$ polarized by the pullback of $H$ and hence
of $\deg(f_i) = q^{\dim X_i} = q^{n-i}$ (cf.~\cite[Lemma 2.1]{nz2}).

$S := X_{n-2}$ is a normal surface with ample reduced (Cartier) divisors
$$C_i := (\sigma_1 \cdots \sigma_{n-2})^*V_i \hskip 1pc
(n-1 \le i \le s)$$ so that $(S, D_{n-2})$,
$(S, C)$ and $S$ are all log canonical,
where $$C := \sum_{i=n-1}^s C_i \le D_{n-2}$$
(cf.~\cite[Notation 4.1]{KM}, or \cite[Remark 2.7.3, Theorem 2.7.4]{ENS}).
By the construction, $f_{n-2}^{-1}$ stabilizes $C_i$ and hence its irreducible components
$C_{ij}$ (after replacing $f$ by its power), so $f_{n-2}^*C_{ij} = qC_{ij}$
(cf.~\cite[Lemma 2.1]{nz2}).
Write
$$K_S + C = f_{n-2}^*(K_S + C) + \Delta$$ with
an effective Weil divisor $\Delta$ containing no any $C_{ij}$.

\begin{claim}\label{c2.1}
The following are true.
\begin{itemize}
\item[(1)]
$s = n+1$, $K_S + C \sim 0$, $\Delta = 0$, and $f_{n-2} : S \to S$ is
\'etale outside $C \cup f_{n-2}^{-1}(\Sing S)$.
\item[(2)]
$S = X_{n-2} \isom \BPP^2$, and
$C = \sum C_i = \sum_{i=n-1}^{n+1} (\sigma_1 \cdots \sigma_{n-2})^* V_i$
is the sum of three normal crossing lines.
\end{itemize}
\end{claim}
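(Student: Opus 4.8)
The plan is to apply Lemma \ref{surf} to the surface triple $(S, C, f_{n-2})$ and then to pin down the geometry by a del Pezzo degree estimate. First I would check the hypotheses of Lemma \ref{surf}. The divisor $C = \sum_{i=n-1}^{s} C_i$ is reduced by construction, and since we are in the case $s \ge n+1$ it is the sum of $t := s-n+2 \ge 3$ nonzero ample Cartier divisors $C_i$. Because each $C_i$ is ample, it is connected and any two of them meet; contracting each connected $C_i$ to a point turns the dual graph of $C$ into a complete graph on $t \ge 3$ vertices, which contains a cycle, so the dual graph of $C$ itself contains a loop. With the relation $K_S + C = f_{n-2}^*(K_S + C) + \Delta$ already in place, all hypotheses of Lemma \ref{surf} hold. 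It then yields at once that $S$ is klt and rational, that $(S,C)$ is log canonical with no three components of $C$ through a common point, that $\Delta = 0$ and $K_S + C \sim 0$, and that $f_{n-2}$ is \'etale outside $C \cup f_{n-2}^{-1}(\Sing S)$. This already gives every assertion of Claim \ref{c2.1}(1) except the equality $s = n+1$.

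Next I would upgrade the singularities. Since $-K_S \sim C = \sum_i C_i$ is a sum of Cartier divisors, $K_S$ is Cartier and $S$ is Gorenstein; being also klt, $S$ has only canonical, hence Du Val, singularities, and $-K_S$ is ample. Thus $S$ is a Gorenstein del Pezzo surface, and passing to its (crepant) minimal resolution $\tau : S' \to S$ we have $K_{S'}^2 = K_S^2$ with $S'$ a weak del Pezzo; consequently $1 \le K_S^2 \le 9$, with $K_S^2 = 9$ forcing $S' \isom \BPP^2$ and hence $S \isom \BPP^2$.

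The decisive step is then the degree estimate. Using that each $C_i$ is an ample Cartier divisor on the surface $S$ (so $C_i^2 \ge 1$ and $C_i \cdot C_j \ge 1$ for $i \ne j$), I compute
$$9 \ge K_S^2 = C^2 = \sum_{i} C_i^2 + 2\sum_{i<j} C_i \cdot C_j \ge t + t(t-1) = t^2 .$$
Since $t \ge 3$, this forces $t = 3$, i.e.\ $s = n+1$, and moreover $K_S^2 = 9$, so $S \isom \BPP^2$ by the previous paragraph. Equality throughout also forces $C_i^2 = 1$ and $C_i \cdot C_j = 1$: on $\BPP^2$ a reduced divisor with $C_i^2 = 1$ is a single line, and $C_i \cdot C_j = 1$ says two such lines meet transversally in one point. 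Finally, the log canonicity of $(S,C)$ (no three components concurrent) shows the three lines are in general position, so $C$ is a triangle of normal crossing lines; this is exactly Claim \ref{c2.1}(2) and completes Claim \ref{c2.1}(1).

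I expect the main obstacle to be the passage to the possibly singular surface: one must be careful that the degree bound $K_S^2 \le 9$ and the characterization of its equality case apply to $S$ itself, which is why the Gorenstein-plus-klt reduction to Du Val singularities (and the crepant minimal resolution) is essential, rather than merely the log del Pezzo property. A secondary technical point is the verification that the dual graph of $C$ genuinely contains a loop, for which the connectedness and pairwise intersection of the ample $C_i$, reorganized via contraction to a complete graph on $t \ge 3$ vertices, is the cleanest route.
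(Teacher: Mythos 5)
Your proof is correct, and its skeleton matches the paper's: apply Lemma \ref{surf} to $(S,C,f_{n-2})$, upgrade $S$ to a Gorenstein klt (hence Du Val) del Pezzo surface via $-K_S\sim C$ with $C$ Cartier, and identify $S$ with $\BPP^2$ through the bound $K_S^2\le 9$ on the crepant minimal resolution (the paper packages this bound via relatively minimal rational surfaces rather than the weak del Pezzo classification, but the content is the same). Where you genuinely diverge is in how the two remaining assertions are obtained. The paper reads off $s=n+1$ from Lemma \ref{surf} itself (via the structure of the invariant divisor --- a simple loop of $\BPP^1$'s, in which each component meets only two others, whereas every ample $C_i$ must meet every component), and it gets Claim \ref{c2.1}(2) by citing the classification of $f^{-1}$-stable curves in $\BPP^2$ from \cite[Theorem 1.5 in arXiv version 1]{nz2}, using that $f_{n-2}^{-1}$ stabilizes each component of $C$. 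You instead obtain both from the equality case of the purely numerical estimate $t^2\le \sum_i C_i^2+2\sum_{i<j}C_i\cdot C_j=C^2=K_S^2\le 9$ with $t:=s-n+2\ge 3$: this forces $t=3$ (so $s=n+1$), $K_S^2=9$ (so $S\isom\BPP^2$), and $C_i^2=C_i\cdot C_j=1$, whence each $C_i$ is a line and the no-three-concurrent condition from Lemma \ref{surf}(1) yields the triangle. Your route is more self-contained --- beyond Lemma \ref{surf} it uses no dynamical input and no external citation --- at the cost of being slightly longer; both are sound. Two small points worth making explicit in your write-up: the three lines are pairwise distinct because $C$ is reduced (Lemma \ref{surf}(2)), and $S'\isom\BPP^2$ implies $S\isom\BPP^2$ because $\BPP^2$ contains no curve that could be contracted to a singular point of $S$.
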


\begin{proof}
Since each $C_i$ is ample, $\sum_{i=n-1}^{n+1} C_i$ contains a loop.
Then (1) follows from Lemma \ref{surf}, noting that $f_{n-2}^*C_i = qC_i$
($i = n-1, \dots, s)$ with $s \ge n+1$.
Since $-K_S \sim C$, our $S$ is Gorenstein and also klt by Lemma \ref{surf},
so $S$ is a del Pezzo surface with only Du Val singularities.
Since all three $C_i$ in $C$ are ample Cartier divisors, we have $K_S^2 = C^2 \ge 9$.
Then it is known that $S \isom \BPP^2$.

Indeed, let $\tau: S' \to S$ be the minimal resolution. Then $K_{S'} = \tau^*K_S$
since $S$ has only Du Val singularities. Thus $S'$ is a smooth rational surface
with $K_{S'}^2 = K_S^2 \ge 9$. Let $S' \to S_m$ be the blowdown to a relatively
minimal smooth rational surface.
Then $S_m$ is either $\BPP^2$ with $K_{S_m}^2 = 9$ or a Hirzebruch surface
with $K_{S_m}^2 = 8$; and we have
$K_{S_m}^2 \ge K_{S'}^2$ where equality holds only when $S'= S_m$.
Thus $S' = S_m = \BPP^2$. Hence the contraction
$\tau: S' \to S$ is an isomorphism because there is no curve on $\BPP^2$ that can be possibly
contracted to a point on $S$.

Since $f_{n-2}^{-1}$ stabilizes $C_i$ and its irreducible
components (after replacing $f$ by its power), our (2) follows from
\cite[Thm 1.5 in arXiv version \bf{1}]{nz2}.
\end{proof}

\begin{claim}\label{c2.2}
Every $\sigma_{k+1}$ $(0 \le k \le n-3)$ is an embedding onto its image,
i.e., $(\sigma_1 \cdots \sigma_{k})^*$ $V_{k+1}$ is an $($irreducible$)$
normal Cohen-Macaulay variety.
Hence every $V_i$ $(1 \le i \le s = n+1)$ is a normal variety.
\end{claim}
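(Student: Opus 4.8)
The plan is to show that each $\sigma_{k+1}$ is an isomorphism onto its image by proving that the divisor $W_k := (\sigma_1 \cdots \sigma_k)^* V_{k+1}$ on $X_k$ is normal and irreducible; then its normalization $\sigma_{k+1}$ is an isomorphism. I would obtain normality from Serre's criterion $R_1 + S_2$: the condition $S_2$ comes from Cohen--Macaulayness via Proposition \ref{ThC'}, while $R_1$ is equivalent to the vanishing of the conductor $\Gamma_{k+1}$ of $\sigma_{k+1}$, which I force by a global degree computation on the base surface $S = X_{n-2} \isom \BPP^2$. Irreducibility is then automatic, since the ample divisor $W_k$ is connected and a connected normal variety is irreducible.

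First I would record, on each $X_k$, the equivariance relation $K_{X_k} + D_k = f_k^*(K_{X_k} + D_k) + \Delta_k$ with $\Delta_k = \sigma_k^* \Delta_{k-1} \ge 0$. This follows by pulling the relation $K_{X_0} + D_0 = f^*(K_{X_0} + D_0) + \Delta_0$ (with $\Delta_0 \ge 0$ the part of the ramification off $D_0$) down the tower, using the commuting squares $\sigma_k \circ f_k = f_{k-1} \circ \sigma_k$ that hold after replacing $f$ by a power. Since $W_k$ is the pullback of the Cartier divisor $V_{k+1}$ it is Cartier, and $f_k^* W_k = q W_k$ with $q > 1$, so $f_k$ is ramified along $W_k$; moreover $(X_k, D_k - W_k)$ is log canonical around $W_k$. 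Thus Proposition \ref{ThC'}, applied with $G = D_k - W_k$ and $D = W_k$, shows that $\OO_{W_k}$ is Cohen--Macaulay around $W_k$, hence $S_2$.

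Next I would extract the vanishing of all conductors from the surface. The different computation (cf.\ \cite[Corollary 16.7]{Ko+}) feeds each conductor $\Gamma_j$ of $\sigma_j$ into $D_j$, so propagating down the tower gives $D_{n-2} \ge C + \sum_{j=1}^{n-2} (\sigma_{j+1} \cdots \sigma_{n-2})^* \Gamma_j$, where $C = \sum_{i=n-1}^{n+1} C_i$. On $S \isom \BPP^2$ Claim \ref{c2.1} gives $K_S + C \sim 0$, while intersecting the equivariance relation on $S$ with the ample $H_S$ and using $f_{n-2\,*} H_S \equiv q H_S$ yields $(K_S + D_{n-2}) \cdot H_S = \tfrac{1}{1-q} \Delta_{n-2} \cdot H_S \le 0$. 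Subtracting $(K_S + C) \cdot H_S = 0$ gives $(D_{n-2} - C) \cdot H_S \le 0$, whereas $D_{n-2} - C \ge \sum (\sigma_{j+1} \cdots \sigma_{n-2})^* \Gamma_j \ge 0$; as $H_S$ is ample this forces $D_{n-2} = C$, and hence every $\Gamma_j = 0$. Therefore each $W_{j-1}$ is normal in codimension one, i.e.\ $R_1$ holds.

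Combining $R_1$ with the $S_2$ from Proposition \ref{ThC'}, Serre's criterion gives that each $W_k$ is normal, so $\sigma_{k+1}$ is an embedding; connectedness of the ample $W_k$ then upgrades to irreducibility, and by the symmetry of the construction in the indices $i$, every $V_i$ is normal. The main obstacle I anticipate is the bookkeeping in the third step: one must verify that every conductor $\Gamma_j$ really survives, with positive coefficient, in the lower bound for $D_{n-2}$ after pullback, and that the possible reducibility of the intermediate $W_k$ does not corrupt this count. I expect this is handled most safely by a descending induction starting from $S \isom \BPP^2$ --- where the three normal-crossing lines are already completely understood --- and climbing back up one normalization at a time, using the Cohen--Macaulayness of Proposition \ref{ThC'} to control the total spaces along the way.
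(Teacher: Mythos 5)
Your proposal is correct and takes essentially the same route as the paper's own proof: $S_2$ via Proposition \ref{ThC'} applied with $G = D_k - (\sigma_1\cdots\sigma_k)^*V_{k+1}$ and $D = (\sigma_1\cdots\sigma_k)^*V_{k+1}$ (with condition (1) supplied by the equivariance relation of Proposition \ref{ThF}), $R_1$ by feeding the conductors into $D_{n-2}-C$ and killing that difference on $S \isom \BPP^2$ via Claim \ref{c2.1}, and finally Serre's criterion plus connectedness from ampleness to get normality and irreducibility. The only cosmetic difference is how $D_{n-2}=C$ is reached --- the paper argues that a nonzero $\Theta \le D_{n-2}-C$ would satisfy $f_{n-2}^*\Theta = q\Theta$ and hence contradict Claim \ref{c2.1}(1) (the \'etaleness of $f_{n-2}$ outside $C \cup f_{n-2}^{-1}(\Sing S)$), rather than your intersection computation against an ample class --- and the conductor-survival bookkeeping you flag is handled in the paper exactly as you anticipate: at each stage of the tower one chooses a component meeting the (pullback of the) conductor, which the ampleness of the $V_i$ makes possible.
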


\begin{proof}
Since $(X_{k}, D_{k})$ is log canonical and $D_{k} = (\sigma_1 \dots \sigma_{k})^*V_{k+1} +$
(other effective divisor), $(X_{k}, D_{k} - (\sigma_1 \dots \sigma_{k})^*V_{k+1})$
is also log canonical (cf.~\cite[Corollary 2.35]{KM}).
By Proposition \ref{ThC'} (for its condition (1) about $R_{f_{k}}$,
see the proof of Proposition \ref{ThF} below), the reduced divisor
$(\sigma_1 \dots \sigma_{k})^*V_{k+1}$ is Cohen-Macaulay. If this reduced divisor
is not regular in codimension one, then the conductor divisor of $X_{k+1}$ over it
will give rise to an effective divisor $\Theta$ in $D_{n-2} - C$ (as we did for $\Gamma_1$)
which is preserved by $f_{n-2}^{-1}$ so that $f_{n-2}^*\Theta = q\Theta$
(cf.~\cite[Lemma 2.1]{nz2}), contradicting Claim \ref{c2.1}.
Thus, by Serre's $R_1+S_2$ criterion, this reduced divisor is normal
(and also connected by the ampleness of $V_i$).
The second assertion follows from the first with $k = 0$ and by relabeling $V_k$.
\end{proof}

We continue the proof of Theorem \ref{ThE'}.
By Claim \ref{c2.2}, $X_k$ ($1 \le k \le n-2$) is equal to $\cap_{i=1}^k V_i$
and Cohen-Macaulay.
We now apply \cite[Theorem 3.6]{Mo} to show inductively the assertion
that $$(X_i, \SO({V_{i+1}}_{|X_i})) \isom (\BPP^{n-i}, \SO(1)) \hskip 1pc (0 \le i \le n-2).$$
Relabel $V_i$ so that $d_{n+1} \ge d_n \ge \cdots \ge d_1$.
By Claim \ref{c2.1}, $$(X_{n-2}, \SO({V_{n-1}}_{|X_{n-2}})) \isom (\BPP^2, \SO(1)).$$
Note that $X_{n-2} = {V_{n-2}}_{| X_{n-3}}$, and $\SO({X_{n-2}}_{| X_{n-2}}) \isom \SO_{\BPP^2}(1)$
because
$$1 \le ({X_{n-2}}_{|X_{n-2}})^2 = ({V_{n-2}}_{|X_{n-2}})^2 \le ({V_{n-1}}_{|S})^2 = C_{n-1}^2 = 1$$
(here we used $d_{n-1} \ge d_{n-2}$). Thus, by [ibid.],
$$(X_{n-3}, \SO({V_{n-2}}_{|X_{n-3}})) \isom (\BPP^3, \SO(1)).$$
Suppose the assertion is true for $i \ge k$. Then
$$\BPP^{n-k} \isom X_{k} = {V_{k}}_{ | X_{k-1}}, \,\,\,\, \SO({X_{k}}_{ | X_{k}}) \isom \SO_{\BPP^{n-k}}(1)$$
because
$$\begin{aligned}
1 &\le ({X_{k}}_{|X_{k}})^{n-k} = ({V_{k}}_{|X_{k}})^{n-k} \le ({V_{k+1}}_{|X_{k}})({V_{k}}_{|X_{k}})^{n-k-1}
= X_{k+1} ({V_{k}}_{|X_{k}})^{n-k-1} \\
&= ({V_{k}}_{|X_{k+1}})^{n-k-1} \le
({V_{k+1}}_{|X_{k+1}})^{n-k-1} \le
\cdots \le  ({V_{n-2}}_{|X_{n-2}})^2 = 1.
\end{aligned}$$
Thus the assertion is true for $i = k-1$ by [ibid.].
This proves the assertion.

Now take $H \subset X = \BPP^n$ to be the hyperplane
and $d_i = \deg(V_i)$.
We have
$$K_X + \sum V_i = f^*(K_X + \sum V_i) + N$$
where $N$ is an effective Weil divisor.
Thus
$$0 \le N \sim (1-q)(K_X + \sum V_i) \sim (q-1)(n+1 - \sum d_i) H$$
and hence $n+1 \ge \sum_{i=1}^{n+1} d_i \ge n+1$.
So $d_i = 1$. By \cite[Thm 1.5 in arXiv version \bf{1}]{nz2},
$\cup \, V_i$ is a normal crossing union of $n+1$ hyperplanes,
so that we may assume that $V_i = \{X_i = 0\}$,
and also $f^*X_i = X_i^q$ (after replacing $X_i$ by a scalar multiple)
since $f^*V_i = qV_i$.
This proves Theorem \ref{ThE'} because the last `if part' is clear.

\par \vskip 1pc
If the Cartier-ness of $V_i$ in Theorem \ref{ThE'} is replaced by the weaker $\BQQ$-Cartier-ness, we have
the following, where the condition (2) is true when $\rho(X) = 1$ and $X$ is $\BQQ$-factorial.

\begin{proposition}\label{ThF}
Let $X$ be a normal projective variety of dimension $n \ge 2$,
$V_i$ $(1 \le i \le s)$ prime divisors,
and $f : X \to X$ an endomorphism with $\deg(f) = q^n > 1$ such that:
\begin{itemize}
\item[(1)]
$X$ has only log canonical singularities around $\, \cup \, V_i \,$;
\item[(2)]
every $V_i$ is $\BQQ$-Cartier and ample; and
\item[(3)]
$f^{-1}(V_i) = V_i$ for all $i$.
\end{itemize}
Then $s \le n+1$; and $s = n + 1$ only if:
$f$ is \'etale outside
$(\cup \, V_i) \cup f^{-1}(\Sing X)$, and
$\cap_{i=1}^t V_{b_i} \subset X$
is a normal $($irreducible$)$ subvariety for every subset
$\{b_1, \dots, b_t\} \subseteq \{1, \dots, n+1\}$
with $1 \le t \le n-2$ $($and further, $K_X + \sum_{i=1}^{n+1} V_i \equiv 0$
provided that $\rho(X) = 1$ or $f^*K_X \equiv qK_X)$.
\end{proposition}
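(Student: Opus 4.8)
The plan is to run the proof of Theorem~\ref{ThE'} essentially verbatim, the one genuine difference being the numerical bound $s \le n+1$, where the Cartier-ness of the $V_i$ entered in a way that now fails. After replacing $f$ by a power we may assume $f^{-1}(V_i)=V_i$, so that $f^*V_i=m_iV_i$ for the prime divisor $V_i$; comparing top self-intersections, $m_i^n V_i^n=(f^*V_i)^n=\deg(f)\,V_i^n=q^n V_i^n$ (with $V_i^n>0$ by ampleness), whence $m_i=q$ and $f$ is polarized by a Cartier multiple of $V_1$. Writing $K_X=f^*K_X+R_f$ with $R_f\ge 0$ and removing the ramification $(q-1)V_i$ along each $V_i$ gives the identity $K_X+\sum V_i=f^*(K_X+\sum V_i)+\Delta$ with $\Delta\ge 0$ an effective Weil divisor containing no $V_i$. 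I would then build the same tower of normalizations $\sigma_k:X_k\to X_{k-1}$ ($X_0:=X$, $D_0:=\sum V_i$) as in Theorem~\ref{ThE'}, starting $(X_0,D_0)$ off log canonical by Proposition~\ref{ThC} and propagating log canonicity by Kawakita's inversion of adjunction \cite[Theorem]{Kk}, together with polarized lifts $f_k$ of $f$. The point is that the ampleness of each $V_i$ (rather than the proportionality $V_i\equiv d_iH$ of Theorem~\ref{ThE'}) is exactly what keeps each restricted divisor nonzero and connected, so the construction descends to a normal surface $S:=X_{n-2}$ carrying the ample divisors $C_i:=(\sigma_1\cdots\sigma_{n-2})^*V_i$ ($n-1\le i\le s$); along the way one checks the ramification hypothesis~(1) of Proposition~\ref{ThC'} for each $f_k$.

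The crux is bounding $s$. Suppose $s\ge n+1$, so at least three of the $C_i$ lie on $S$; any two ample curves on a surface meet, so $D_{n-2}$ contains a loop and Lemma~\ref{surf} applies, giving that $S$ is a klt rational surface, that $K_S+D_{n-2}\sim 0$ with $\Delta_S=0$, that $f_{n-2}$ is \'etale off $D_{n-2}\cup f_{n-2}^{-1}(\Sing S)$, that no three components of $D_{n-2}$ pass through one point, and that $\tau^{-1}D_{n-2}$ is a \emph{simple} loop of $\BPP^1$'s on the minimal resolution $\tau:S'\to S$. \textbf{This count is the main obstacle.} In Theorem~\ref{ThE'} one finished by $m^2\le C^2=K_S^2\le 9$, forcing $m=3$; but that rests on $C_i^2,\,C_i\cdot C_j\in\BZZ_{>0}$ and on the Gorenstein del Pezzo bound, both of which break down once $S$ is only klt (indeed $K_S^2$ is unbounded on klt del Pezzos). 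Instead I would argue combinatorially on the loop. Contracting its exceptional chains, the components of $D_{n-2}$ acquire a single cyclic order, and since no third component meets any of their mutual points, two components of $D_{n-2}$ meet on $S$ \emph{precisely} when they are consecutive in this cyclic order; hence each component of $D_{n-2}$ meets exactly two others. As the $C_i$ are ample, each $C_i$ meets every other $C_j$; comparing with ``meets exactly two'' forces the number of $C_i$ to be at most $3$. Thus $s\le n+1$, and when $s=n+1$ the loop is the triangle $D_{n-2}=C_{n-1}+C_n+C_{n+1}$, with no further component.

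Finally I would read off the structure for $s=n+1$. That $D_{n-2}$ has no component beyond the three $C_i$ is exactly what drives the argument of Claim~\ref{c2.2}: Proposition~\ref{ThC'} makes each reduced divisor $(\sigma_1\cdots\sigma_k)^*V_{k+1}$ Cohen--Macaulay, while any failure of regularity in codimension one would produce, through its conductor, a further $f_{n-2}^{-1}$-stable divisor inside $D_{n-2}-\sum C_i$, which we have just ruled out; so by Serre's $R_1+S_2$ criterion every intersection $\cap_i V_{b_i}$ (with $t\le n-2$ factors) is normal and irreducible. For the \'etaleness, the composite finite map $\pi:=\sigma_1\cdots\sigma_{n-2}:S\to \cap_{i=1}^{n-2}V_i\subset X$ intertwines $f$ with $f_{n-2}$, so pulling back the displayed identity gives $\pi^*\Delta=(K_S+D_{n-2})-f_{n-2}^*(K_S+D_{n-2})=\Delta_S=0$; were $\Delta\ne 0$, its product with the $n-2$ ample divisors $V_1,\dots,V_{n-2}$ would be a nonzero effective cycle meeting the now-irreducible surface $\cap_{i=1}^{n-2}V_i$, contradicting $\pi^*\Delta=0$. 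Hence $\Delta=0$, i.e.\ $f$ is \'etale outside $(\cup V_i)\cup f^{-1}(\Sing X)$. Under either proviso one has $f^*K_X\equiv qK_X$ (automatic when $\rho(X)=1$, since then $f^*=q\,\id$ on $N^1(X)$), and feeding $\Delta=0$ into $K_X+\sum V_i=f^*(K_X+\sum V_i)$ yields $(q-1)(K_X+\sum V_i)\equiv 0$, so $K_X+\sum V_i\equiv 0$, as claimed.
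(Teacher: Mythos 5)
Your proposal is correct, and despite your framing it follows essentially the same route as the paper: the paper's proof of Proposition \ref{ThF} also runs the tower of normalizations from Theorem \ref{ThE'}, invokes Claims \ref{c2.1}(1) and \ref{c2.2} ``with the same proof,'' and then derives $\Delta_f = 0$, the \'etaleness outside $(\cup V_i) \cup f^{-1}(\Sing X)$, and (under either proviso) $K_X + \sum V_i \equiv 0$ exactly as you do. The one place you believe you are deviating --- the count $s \le n+1$ --- is in fact not a deviation. In the paper, already for Theorem \ref{ThE'}, that count is obtained in Claim \ref{c2.1}(1) from Lemma \ref{surf}, whose conclusion (4) (``$\tau^{-1}D$ is a simple loop of $\BPP^1$'s'') encodes precisely your cyclic-order argument; the Gorenstein del Pezzo bound $K_S^2 = C^2 \ge 9$ appears only in Claim \ref{c2.1}(2), where it serves to identify $S \isom \BPP^2$ --- information needed for the stronger conclusion $X = \BPP^n$ of Theorem \ref{ThE'}, but never for bounding $s$, and indeed neither available nor used in Proposition \ref{ThF}. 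So your combinatorial count is the paper's implicit one, made explicit. One refinement you should add when making it explicit: at that stage the $C_i$ are only known to be reduced, not irreducible (irreducibility of the intersections is exactly what the Claim \ref{c2.2} step proves afterwards), so the inference ``each component meets exactly two others, the $C_i$ pairwise meet, hence at most three $C_i$'' is a non sequitur if some $C_i$ has several components. The fix is one line: an ample effective divisor on a projective surface is connected (Hodge index theorem), so each $C_i$ occupies a connected arc of the cycle; an arc of a cycle has only two ends and therefore can be adjacent to at most two of the other pairwise-disjoint arcs, whence pairwise intersection of the ample $C_i$ forces their number to be at most $3$, i.e.\ $s \le n+1$.
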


\begin{proof}
We assume that $s \ge n+1$ and use the notation and steps in the proof of Theorem \ref{ThE'}.
Then $f^*V_i = qV_i$ and $f$ is polarized by a multiple $H$ of $V_1$.
Write $$K_X + D_0 = f^*(K_X + D_0) + \Delta_f$$ with
$\Delta_f$ an effective divisor containing no any $V_i$.
Pulling back by the normalization (onto $V_1$) $\sigma_1 : X_1 \to X_0 = X$,
we have
$$K_{X_1} + D_1 = f_1^*(K_{X_1} + D_1) + \sigma_1^* \Delta_f$$
where $f_1 : X_1 \to X_1$ is lifted from $f_{|V_1}$ and polarized by $\sigma_1^*H$.
By \cite[Lemmas 5.3 and 2.1, proof of Proposition 5.4 in arXiv version \bf{1}]{nz2},
$\sigma_1^* \Delta_f$ contains no any component of $D_1$,
our $D_1$ is reduced, and $f_1^*D_{1j} = qD_{1j}$
for every irreducible component $D_{1j}$ of $D_1$ (after $f$ is replaced by its power).
As in the proof of Theorem \ref{ThE'},
for $1 \le i \le n-2$,
we have log canonical pairs $(X_i, D_i)$
with $D_i = \sum_j D_{ij}$
reduced, and normalizations (onto the images) $\sigma_i : X_i \to X_{i-1}$.

We still have Claims \ref{c2.1}(1) (hence $s = n+1$) and \ref{c2.2}
with the same proof, noting that
$(\sigma_1 \cdots \sigma_t)^*V_{t + v}$ ($v \ge 1$)
are reduced by the argument above for
all pairs $(X, \, \sum_{k=0}^{t} V_k)$.
So $\cap_{k=1}^{i} V_k = X_i$ (a normal variety).
By the construction, inductively, we can write
$$Eq(\ref{ThF}.1) \hskip 2pc K_{X_i} + D_i = f_i^*(K_{X_i} + D_i) + {\Delta_f}_{ | X_i}$$
so that $f_i^*D_{ij} = qD_{ij}$ and $R_{f_i} = (q-1)D_i + {\Delta_f}_{ | X_i}$.
These, together with $D_{n-2} \ge C$ and Claim \ref{c2.1}(1), imply that $D_{n-2} = C$
and ${\Delta_f}_{ | X_{n-2}} = 0$,
so $\Delta_f = 0$ by the ampleness of $V_i$ and hence $R_f = (q-1) D_0$.
For the second assertion, we use the purity of branch loci, Claim \ref{c2.2}, and
the relabeling of $V_k$.

If $f^*K_X \equiv qK_X$ or $\rho(X) = 1$
(and using \cite[Lemma 2.1]{nz2}), the Eq(\ref{ThF}.1) above (with $i = 0$) implies
$K_X + D_0 \equiv q(K_X + D_0)$ and hence the last part of the proposition.
\end{proof}

\begin{setup}
{\bf Proof of Theorem \ref{ThG}}
\end{setup}

By the assumption, $f : X \to X$ is a polarized endomorphism with $\deg(f) = q^n > 1$;
and either $n = \dim X \le 3$, or ${f^*}_{|N^1(X)} = q \, \id_{N^1(X)}$.
We need to prove the four assertions in Theorem \ref{ThG}.
Our proof will be by the induction on $\dim X$.
The case $\dim X = 1$ follows from the Hurwitz formula. Suppose Theorem \ref{ThG} is true
for those $X'$ with $\dim X' \le n-1$. Consider the case $\dim X = n \ge 2$.
We may assume that there are prime divisors
$V_j$ ($1 \le j \le s$) with $f^{-1}(V_j) = V_j$ for some
$s \ge \rho(X) + n - 2 \ge 1$.
We will mainly prove (1) and (4) of Theorem \ref{ThG} because (2) and (3) are similar
and easier. So we may assume that $s \ge n + \rho(X) \ge n+1 \ge 3$.
By the assumption, $f^*H \sim qH$, with an ample Cartier divisor $H$
and $q = \root{n}\of{\deg(f)} > 1$; further, $f^*V_j = qV_j$
(cf.~\cite[Lemma 2.1]{nz2}).
So one may compute the ramification divisor of $f$ as:
$$R_f = (q-1) \sum V_i + \Delta$$
with
$\Delta$ an effective Weil divisor containing no any $V_j$,
and hence
$$K_X + \sum V_j = f^*(K_X + \sum V_j) + \Delta .$$
Further, the second part of (1) follows from Proposition \ref{ThC}.
Indeed, since $R_f > 0$, i.e., $f$ is not \'etale in codimension one,
$K_X$ is not pseudo-effective (cf. Lemma \ref{pe} or \cite[Lemma 3.7.1]{ENS}). Hence $X$ is uniruled
by the well known results of Mori-Mukai and Boucksom-Demailly-Paun-Peternell
(\cite{MM}, \cite{BDPP}).

Let $X \dasharrow X_1$ be a
divisorial extremal
contraction or a flip (and then $n \ge 3$).
Let $V(1)_j \subset X_1$ ($1 \le j \le s_1$) be the image of $V_j$ when $V_j$
is not exceptional over $X_1$. Thus $s_1 \ge n + \rho(X_1)$
since $s_1 \ge s-1$ and $\rho(X_1) = \rho(X) - 1$ (resp.~$s_1 = s$ and $\rho(X_1) = \rho(X)$)
when $X \dasharrow X_1$ is divisorial (resp.~flip).
The map $f$, replaced by its power, descends to a holomorphic
endomorphism $f_1 : X_1 \to X_1$ of degree $q^n$,
by using \cite[Theorem 1.1, Lemmas 3.6 and 3.7]{uniruled}
(under the condition $n = 3$ or ${f^*}_{ | N^1(X)} = q \, \id_{N^1(X)}$)
and \cite[Proposition 3.6.8]{ENS} (saying that negative curves are $f^{-1}$-periodic,
under the condition $n = 2$).
Clearly, $f_1^{-1}V(1)_j = V(1)_{j}$.
Continuing the process, we have a composition
$$X = X_0 \dasharrow X_1 \dasharrow \cdots \dasharrow X_r$$
of divisorial contractions
and flips, holomorphic maps $f_i : X_i \to X_i$ induced from $f$ (replaced by its power),
and prime divisors $V(i)_j \subset X_i$ ($1 \le j \le s_i$) which are the images of $V_j$, for some
$s_i \ge n + \rho(X_i) \ge 3$.
Note that $f_i^*V(i)_j = qV(i)_j$ and hence
$$R_{f_i} \ge (q-1) \sum_j V(i)_j > 0 .$$
Thus, as reasoned above for $X$, our $K_{X_i}$ is not pseudo-effective
and hence $X_i$ is uniruled.
Further, denoting by $\Delta(i)$ the image of $\Delta$, we have
$$(*) \hskip 2pc K_{X_i} + \sum_j V(i)_j = f_i^*(K_{X_i} + \sum_j V(i)_j) + \Delta(i)$$
By the minimal model program and \cite[the proof of Corollary 1.3.2]{BCHM}
and since $K_X$ is not pseudo-effective as mentioned early on,
we may assume that some $W := X_r$ has
an extremal contraction
$$\pi: W = X_r \to Y$$ of fibration type
so that $\dim Y \le n-1$, and
$Y$ and $X_i$ all have only $\BQQ$-factorial
Kawamata log terminal (klt) singularities
(cf. \cite[Lemma 5-1-5, Propositions 5-1-6 and 5-1-11]{KMM}, \cite[Ch VII, Corollary 3.3]{ZDA}).
Further, $f_r$ is polarized
by some ample divisor $H_W$ of degree $q^n$ and it descends to an
endomorphism (polarized by some ample Cartier divisor $H_Y$
of degree $q^{\dim Y}$):
$$f_Y : Y \to Y ;$$
see
\cite[Theorem 2.13, Lemma 2.12; Theorems 1.1 and 2.7 for $n \le 3$]{uniruled}, \cite[Lemma 2.3]{nz2};
and note that
if ${f^*}_{| N^1(X)} = q \, \id_{N^1(X)}$ on $X$ then the same is true on all of $X_i$ and $Y$.
This is the second place we use the Hyp(A) in Theorem \ref{ThG}.

\begin{setup}
{\bf The case \(\dim Y = 0\).} Then $\rho(W) = 1$. Thus $-K_W$ is ample ($W$
being uniruled)
and $W$ and hence $X$ are rationally connected (cf. \cite[Theorem 1]{Zq}, \cite[Corollary 1.5]{HM}),
so $q(X) = q(W) = 0$ (cf. the proof of Lemma \ref{surf}).
Since $s_r \ge n + \rho(X_r) = n+1$, by Proposition \ref{ThF},
$s_r = n + 1$ and $K_W + \sum_{j=1}^{n+1} V(r)_j$ is numerically (and hence $\BQQ$-linearly,
$q(W)$ being zero)
equivalent to zero.
Then, by the construction, $s_i = n + \rho(X_i)$ for all $i$
(so Theorem \ref{ThG}(1) is true)
and the exceptional divisor of $X_i \to X_{i+1}$ is contained
in $\sum_j V(i)_j$ when the map is divisorial. Hence $\sum V_j$ is the sum
of the proper transform of $\sum_j V(r)_j$ and the exceptional divisors
of the composite
$$\delta: X = X_0 \dasharrow X_r = W .$$
Write
$$K_X + \sum_j V_j = \delta^*(K_W + \sum_j V(r)_j) + E_2 - E_1 \sim_{\BQQ} E_2 - E_1$$
for some effective $\delta$-exceptional divisors $E_i$ (with no common components)
whose supports are hence contained in $\cup \, V_j$,
so $f^* E_i = q E_i$; here
the $\delta$-pullback is well defined since $\delta$ involves
only flips and holomorphic maps.
By the display (*) above,
$$E_2 - E_1 - \Delta \sim_{\Q} K_X + \sum_j V_j - \Delta = f^*(K_X + \sum_j V_j) \sim_{\Q} f^*(E_2 - E_1) = q (E_2 - E_1).$$
Thus $\Delta + (q-1) E_2 \sim_{\Q} (q-1) E_1 $.
So $E_1 = E_2 = \Delta = 0$, because $E_1$ is $\delta$-exceptional
and hence has Iitaka $D$-dimension $\kappa(X, E_1) = 0$, and
$\Supp E_1 \subseteq \cup \, V_j$ and $\Supp(E_2 + \Delta)$
have no common components.
Now
$K_X + \sum_j V_j \sim_{\BQQ} 0$,
so $R_f = (q-1) \sum V_j$ and $f$ is \'etale outside $(\cup \, V_j) \cup f^{-1}(\Sing X)$
by the purity of branch loci.

Since $\rho(W) = 1$ (and $q(W) = 0$) and hence $\Pic(X)$ is spanned (over $\BQQ$) by
$H_W$ and $V_j$ with $f^*H_W \sim qH_W$ and $f^*V_j = qV_j$,
we have ${f^*}_{| \Pic X} = q \, \id_{|\Pic X}$ (with $f$ having been replaced by its power),
by the proof of Lemma \ref{surf}.
This proves Theorem \ref{ThG}(4) in the present case;
see \cite[Theorem 6.4]{Sho} for the assertion about $(X, \sum V_i)$ being a toric pair when $\dim X = 2$.
\end{setup}

\begin{setup}
{\bf The case \(1 \le \dim Y \le n-1 \).}
By \cite[Theorem 5.1]{Fr}, the $f_Y$-periodic points are dense and we let
$y_0$ be a general one of them so that $f_Y(y_0) = y_0$ (after replacing $f$ by its power).
Then
$W_0 := \pi^{-1}(y_0)$ is a klt Fano variety.
The restriction $$f_{W_0} = {f_r}_{| W_0} : W_0 \to W_0$$ is an endomorphism of degree $q^{\dim W_0} > 1$
and polarized by ${H_W}_{ | W_0}$.

If the restriction $\pi : V(r)_i \to Y$ ($1 \le i \le s_r(1)$) is not surjective, then ${V(r)_i}_{ | W_0} = 0$
and $V(r)_i$ is perpendicular to any fibre of $\pi : W \to Y$ so that
$$V(r)_i = \pi^*G_i$$
for some prime divisor $G_i \subset Y$ (since the relative Picard number $\rho(W/Y) = 1$)
and also $f_Y^{-1}(G_i) = G_i$; by the inductive hypothesis, $s_r(1) \le \dim Y + \rho(Y)$.

If the restriction
$$\pi : V(r)_j \to Y \,\,\,\, (s_r(1) + 1 \le j \le s_r(1) + s_r(2) = s_r)$$
is surjective,
then these ${V(r)_j}_{ | W_0}$ are ample since $\rho(W/Y) = 1$, and they share no common irreducible
component by the general choice of $W_0 = \pi^{-1}(y_0)$; by the proof of Proposition \ref{ThF},
$s_r(2) \le \dim W_0 + 1$.
Thus
$$n + \rho(X_r) \le s_r \le \dim Y + \rho(Y) + \dim W_0 + 1 = n + \rho(X_r)$$
and all inequalities are actually equalities,
so $s_i = n + \rho(X_i)$ for all $i$ as above, and Theorem \ref{ThG}(1) is true.
Applying the inductive hypothesis on $Y$
we conclude that:
$$(**) \,\,\,\,\,s_r(1) = \dim Y + \rho(Y), \,\,\,\, K_Y + \sum_{i=1}^{s_r(1)} G_i \sim_{\BQQ} 0,
\,\,\,\, {f_Y^*}_{ | \Pic Y} = q \, \id_{\Pic Y} $$
(with $f$ replaced by its power) and that $Y$ is rationally connected, so
$W$ (and hence $X$) are rationally connected by \cite{GHS}.

(For Theorem \ref{ThG} (2) or (3), we have $s_i - (\dim X_i + \rho(X_i) - 2) \ge 0$, or $\ge 1$,
and hence $s_r(1) -(\dim Y + \rho(Y) - 2) \ge 0,$ or $\ge 1$, respectively,
by the upper bound of $s_r(2)$ above,
so we can also apply the induction on $Y$).

Since $\Pic W$ is spanned, over $\BQQ$, by $H_W$ and the pullback of $\Pic Y$,
we have ${f_r^*}_{ | \Pic W} = q \, \id_{\Pic W}$ and hence ${f^*}_{ | \Pic X} = q \, \id_{| \Pic X}$
as reasoned in the case $\dim Y = 0$
(cf. the (**) above and the proof of Lemma \ref{surf}).
Since $s_r(2) = \dim W_0 + 1$, applying the proof of Proposition \ref{ThF} to the pair
$(W_0, f_{W_0})$ and noting that $f_{W_0}^* K_{W_0} \sim_{\BQQ} qK_{W_0}$
for $K_{W_0} = {K_W}_{ | W_0}$, we have
$$(***) \,\,\,\,\, K_{W_0} + \sum_{\ell=1}^{s_r} {V(r)_{\ell}}_{ | W_0} \equiv 0 .$$
Using this and restricting the display (*) above to $W_0$, we get
$\Delta(r)_{| W_0} = 0$ so that $\Delta(r) = \pi^* \Delta_Y$ for some effective divisor
$\Delta_Y \subset Y$.

We now show that $\Delta(r) = 0$ (i.e., $\Delta_Y = 0$).
Let
$$\sigma(s_r) : W(s_r) \to W$$
be the normalization of $V(r)_{s_r} \subset W$.
Pulling back the display (*) above, we get (with $k = s_r$):
$$(****) \,\,\,\,\, K_{W(k)} + D(k) = f(k)^*(K_{W(k)} + D(k)) + \Delta(r)_k$$
where $f(k)$ is the lifting of ${f_k}_{|V(r)_{k}}$
(polarized by the pullback of $H_W$), and $\Delta(r)_k$ is the
pullback of $\Delta(r)$.
As in the proof of Proposition \ref{ThF}, $(W(s_r), D(s_r))$ is log canonical with a reduced divisor
$$D(s_r) \ge \sum_{t = 1}^{s_r - 1} \sigma(s_r)^* V(r)_t$$
where each term in the summand with $t > s_r(1)$ is nonzero because
${V(r)_{t'}}_{| W_0}$ ($t'= t, r$) are ample
and $\sigma(s_r)$ is finite over $V(r)_{s_r} \cap W_0$.
Next, consider the normalization
$$\sigma(k) : W(k) \to W(k+1)$$
(with $k = s_r-1$)
of an irreducible component of $\sigma(k+1)^* V(r)_{k} \subset W_{k+1}$.
Thus for
$$k = s_r, \, s_r-1, \, \dots, \,
k_0 := s_r - \dim W_0 + 1 = s_r(1) + 2$$
we have normalizations $$\sigma(k) : W(k) \to W(k+1) ,$$ log canonical pairs $(W(k), D(k))$
with a reduced divisor
$$D(k) \, \ge \, \text{(the pullback of} \,\, \sum_{i=1}^{k-1} V(r)_i),$$
and the display (****) above
for all these $k$.
The natural composition $\tau_{k_0} : W(k_0) \to Y$
is generically finite and surjective.
Pushing forward the display (****) above with $(\pi_{k_0}/(\deg(\tau_{k_0}))$
and noting that $V(r)_i = \pi^*G_i$ ($1 \le i \le s_r(1)$) and $\Delta(r) = \pi^*\Delta_Y$,
we get
$$K_Y + \sum_{i=1}^{s_r(1)} G_i + C = f_Y^*(K_Y + \sum_{i=1}^{s_r(1)} G_i + C) + \Delta_Y$$
where $C$ is an effective divisor contributed from the branch locus of $\tau_{k_0}$
and others. This and the display (**) above imply
$0 + C \equiv f_Y^*C + \Delta_Y$
and
$0 \equiv (q-1)C + \Delta_Y$.
Thus $C = 0 = \Delta_Y$. Hence $\Delta(r) = 0$.

Now $K_W + \sum_{\ell=1}^{n+\rho(X)} V(r)_{\ell}$ restricts to zero on $W_0$ by the (***) above,
so it is $\BQQ$-linearly equivalent to some pullback $\pi^*L$. Thus the display (*) above becomes
$$\pi^*L \sim_{\BQQ} f_r^*\pi^*L + 0 = \pi^* f_Y^*L \sim_{\BQQ} q \pi^*L .$$
Hence $0 \sim_{\BQQ} \pi^* L \sim K_W + \sum_{\ell} V(r)_{\ell}$.
This will deduce Theorem \ref{ThG}(4) as
we did in the case $\dim Y = 0$.

For Theorem \ref{ThG}(2), by the above induction, either two general points of
$X$ are connected by a chain of rational curves and hence $X$ is rationally connected
by \cite[Corollary 1.5]{HM},
or there is a fibration $\eta: X \to E$ onto a smooth projective curve $E$
such that some power $f^k$ descends to some
$f_E : E \to E$ of degree $q > 1$ (and with genus $g(E) \ge 1$, so $g(E) = 1$)
and a general fibre $X_e$ is rationally
connected; here we used again \cite{GHS}.
In the latter case, let $\Sigma := \{e \in E \, | \, X_e$ is not rationally connected$\}$.
Then $f_E^{-1}(\Sigma) \subseteq \Sigma$. Applying $f_E^{-1}$ a few times
and comparing the cardinalities
of the sets involved, we see that $f_E^{-1}(\Sigma) = \Sigma$.
So $\Sigma = \emptyset$ (and hence every fibre $X_e$ is rationally connected) since $f_E$ is \'etale.
Similarly, every fibre $X_e$ is irreducible and normal
(cf. \cite[Lemma 4.7]{nz2}).
This proves Theorem \ref{ThG}.
\end{setup}


\end{document}